\newcommand{\bB}{{\mathbb{B}}}
\newcommand{\bC}{{\mathbb{C}}}
\newcommand{\bM}{{\mathbb{M}}}
\newcommand{\bN}{{\mathbb{N}}}
\newcommand{\bS}{{\mathbb{S}}}
  \newcommand{\A}{{\mathcal{A}}}
\renewcommand{\H}{{\mathcal{H}}}
  \newcommand{\M}{{\mathcal{M}}}
\renewcommand{\S}{{\mathcal{S}}}
\newcommand{\fA}{{\mathfrak{A}}}
\newcommand{\fH}{{\mathfrak{H}}}
\newcommand{\fK}{{\mathfrak{K}}}
\newcommand{\fM}{{\mathfrak{M}}}
\newcommand{\fN}{{\mathfrak{N}}}
\newcommand{\fS}{{\mathfrak{S}}}
\newcommand{\fs}{{\mathfrak{s}}}
\newcommand{\fT}{{\mathfrak{T}}}
\newcommand{\rC}{\mathrm{C}}
\newcommand{\eps}{\varepsilon}
\renewcommand{\phi}{\varphi}
\newcommand{\upchi}{{\raise.35ex\hbox{$\chi$}}}
\newcommand{\ol}{\overline}
\newcommand{\qand}{\quad\text{and}\quad}
\newcommand{\qor}{\quad\text{or}\quad}
\newcommand{\dist}{\operatorname{dist}}
\newcommand{\id}{\operatorname{id}}
\newcommand{\re}{\operatorname{Re}}
\newcommand{\tr}{\operatorname{tr}}
\newtheorem{lemma}{Lemma}[section]
\newtheorem{theorem}[lemma]{Theorem}
\newtheorem{proposition}[lemma]{Proposition}
\theoremstyle{definition}
\newtheorem{example}[lemma]{Example}
\newtheorem{question}{Question}
\begin{document}
\author{Rapha\"el Clou\^atre}
\address{Department of Mathematics, University of Manitoba, Winnipeg, Manitoba, Canada R3T 2N2}
\email{raphael.clouatre@umanitoba.ca\vspace{-2ex}}
\thanks{The author was partially supported by an NSERC Discovery Grant}
\subjclass[2010]{46L07, 46L52 (46L30)}
\keywords{Operator systems, states, peak points, hyperrigidity conjecture}
\begin{abstract}
We investigate various notions of peaking behaviour for states on a $\rC^*$-algebra, where the peaking occurs within an operator system. We pay particularly close attention to the existence of sequences of elements forming an approximation of the characteristic function of a point in the state space. We exploit such characteristic sequences to localize the $\rC^*$-algebra at a given state, and use this localization procedure to verify  a variation of Arveson's hyperrigidity conjecture for arbitrary operator systems.
 \end{abstract}

\title[Peaking phenomena and local hyperrigidity]{Non-commutative peaking phenomena and a local version of the hyperrigidity conjecture}
\maketitle

\section{Introduction}

The study of unital closed subalgebras of continuous functions on some compact metric space has a rich history, involving questions and techniques ranging from complex function theory to measure theory and functional analysis. The resulting theory of \emph{uniform algebras} is quite extensive \cite{gamelin1969}, and we briefly recall one of its features that is relevant for our present purpose. 

Given a uniform algebra $\A\subset C(X)$ and a point $\xi\in X$, the linear functional on $\A$ of evaluation at $\xi$ can be represented as integration against some Borel probability  measure on $X$. Obviously, the point mass at $\xi$ is always a representing measure, and when it is the unique such, then the point $\xi$ is said to belong to the \emph{Choquet boundary} of $\A$.  Equivalently, we see that $\xi$ lies in the Choquet boundary of $\A$ if the corresponding evaluation functional admits a unique unital completely positive extension to $C(X)$. The Choquet boundary is a meaningful object to associate to a uniform algebra, for at least two reasons. 

First, it is known that the Choquet boundary is dense in the \emph{Shilov boundary} of $\A$, that is the smallest closed subset of $X$ on which every function in $\A$ attains its maximum modulus. In other words, the Choquet boundary provides a mechanism to identify the ``minimal" representation of the elements of $\A$ as functions on some compact metric space. This philosophy has been transplanted in the non-commutative context of unital operator algebras and operator systems, and has been exploited with great success to obtain a corresponding minimal representation of these objects inside a $\rC^*$-algebra. The visionary initial push to make this possible was furnished by Arveson \cite{arveson1969}, and it came to fruition several years later in the form of the \emph{$\rC^*$-envelope} \cite{hamana1979},\cite{MS1998},\cite{dritschel2005},\cite{arveson2008},\cite{davidson2015}. Central to the construction of this envelope are linear maps admitting a unique unital completely positive extension to the ambient $\rC^*$-algebra, very much in the spirit of the defining condition of the classical Choquet boundary. The theory surrounding the $\rC^*$-envelope has since flourished into a widely used invariant for the basic objects of non-commutative functional analysis \cite{hamana1999},\cite{blecher2001},\cite{FHL2016},\cite{CR2017}. 

The second meaningful aspect of the classical Choquet boundary that we wish to emphasize here, is that it can be characterized in a drastically different fashion from the one alluded to above, thus allowing for a wealth of additional interpretations. While the definition we gave above using completely positive extensions is somehow extrinsic, there is an equivalent definition that is intrinsic. Indeed, it is known \cite[Theorem II.11.3]{gamelin1969} that a point $\xi\in X$ lies in the Choquet boundary of $\A$ if and only if it is a \emph{peak point} for $\A$, in the sense that there is a function $\phi\in \A$ with the property that 
\[
|\phi(x)|<1=\phi(\xi)
\]
for every $x\in X$ such that $x\neq \xi$. Non-commutative extensions of this phenomenon have attracted significant interest in recent years \cite{hay2007},\cite{BHN2008},\cite{BR2011},\cite{blecher2013},\cite{BR2013},\cite{BR2014} and  beautiful generalizations of function theoretic results were obtained therein. Nevertheless, it has been noted (see the discussion following \cite[Proposition 5.7]{hay2007}, or \cite[Example 4.4]{BHN2008}) that the classical symbiosis between peak points and the Choquet boundary appears to collapse dramatically in this non-commutative interpretation. A robust connection between the non-commutative concepts is still lacking despite some subsequent efforts \cite{arveson2011},\cite{kleski2014bdry},\cite{NPSV2016}. Exploring a different foundation for such a connection is one of the purposes of the paper.

The other purpose of this work is to address the so-called \emph{hyperrigidity conjecture} of Arveson \cite{arveson2011}.
To motivate this conjecture, we recall a classical result of Korovkin \cite{korovkin1953}. For each $n\in \bN$, let $\Phi_n:C[0,1]\to C[0,1]$ be a positive linear map and assume that
\[
\lim_{n\to \infty}\|\Phi_n(f)-f\|=0
\]
for every $f\in \{1,x,x^2\}$. Then, it must be the case that
\[
\lim_{n\to \infty}\|\Phi_n(f)-f\|=0
\]
for every $f\in C[0,1]$. This striking rigidity phenomenon was elucidated by \v Sa\v skin in \cite{saskin1967}, who showed that the crucial property at play here is that the interval $[0,1]$ consists entirely of peak points for the subspace generated by $\{1,x,x^2\}$. One then wonders whether this is a manifestation of a general phenomenon which is valid in the non-commutative context as well. More precisely, let $\fS$ be an operator system. Arveson conjectured that if the non-commutative Choquet boundary of $\fS$ is ``maximal" then $\fS$ is \emph{hyperrigid}, in the sense that for any injective $*$-representation 
\[
\pi:\rC^*(\fS)\to B(\H_\pi)
\]
and for any sequence of unital completely positive linear maps
\[
\Phi_n:B(\H_\pi)\to B(\H_\pi), \quad n\in \bN
\]
such that
\[
\lim_{n\to \infty}\|\Phi_n(\pi(s))-\pi(s)\|=0, \quad s\in \fS
\]
we must have
\[
\lim_{n\to \infty}\|\Phi_n(\pi(a))-\pi(a)\|=0, \quad a\in \rC^*(\fS).
\]
The notion of hyperrigidity is deeply rooted in various parts of operator algebras and operator theory; for instance, in the appropriate context it has been shown to be equivalent to the \emph{Arveson-Douglas essential normality conjecture} involving quotient modules of the Drury-Arveson space \cite[Theorem 4.12]{kennedyshalit2015}. This conjecture has generated a flurry of activity and has witnessed spectacular progress recently (see \cite{englis2015} and \cite{DTY2016} and the references therein). Although we will not discuss the essential normality conjecture in any more detail here, the aforementioned equivalence makes the hyperrigidity conjecture all the more tantalizing.

At present the hyperrigidity conjecture is still unresolved in full generality. It has been verified in the case where $\rC^*(\fS)$ has countable spectrum \cite[Theorem 5.1]{arveson2011} and in the case where $\rC^*(\fS)$ is commutative \cite[Corollary 6.8]{DK2016}. Some partial results in other contexts have also appeared in \cite{kleski2014hyper},\cite{clouatre2017unp}. The techniques used therein are geared towards establishing the unique extension property of unital completely positive maps. In this paper, we choose a different route. We wish to exhibit a link between hyperrigidity and a seemingly neglected facet of the non-commutative Choquet boundary, namely peaking phenomena; such a link would constitute a first step in a faithful adaptation of the aforementioned approximation results of Korovkin and \v Sa\v skin. More precisely, we will witness hyperrigidity in some ``local" sense, where the precise localization procedure is accomplished via peaking states. It should be noted here that Arveson himself had early results concerning local hyperrigidity  \cite[Theorem 11.2]{arveson2011} in the commutative setting. Our contribution is to show that Arveson's commutative local hyperrigidity, once appropriately interpreted with the help of peaking states, holds in full generality.

We now describe the organization of the paper more precisely. In Section \ref{S:prelim}, we gather the necessary background material and prove some preliminary results that are required throughout.

In Section \ref{S:peaking}, we introduce a notion of peaking states, which goes as follows. Let $\fA$ be a unital $\rC^*$-algebra and let $\fS \subset \fA$ be an operator system. We say that a state $\psi$ on $\fA$ is $\fS$-\emph{peaking} if there is a self-adjoint element $s\in \fS$ such that $\|s\|=1$ and with the property that 
\[
\psi(s)=1>|\phi(s)|
\]
for every state $\phi$ on $\fA$ such that $\phi\neq \psi$. In Theorem \ref{T:peakingexposed} we obtain the following characterization of these objects; it is noteworthy in view of the classical result of Klee \cite{klee1958} which guarantees the existence of a large supply of weak-$*$ exposed points. 

\begin{theorem}\label{T:main1}
Let $\fA$ be a unital $\rC^*$-algebra and let $\fS\subset \fA$ be an operator system.  Let $\psi$ be a state on $\fA$. Then, the following statements are equivalent.
\begin{enumerate}

\item[\rm{(i)}] The state $\psi$ is $\fS$-peaking.

\item[\rm{(ii)}] The state $\psi$ has the unique extension property with respect to $\fS$, and
the restriction $\psi|_{\fS}$ is a weak-$*$ exposed point of  the set of self-adjoint functionals on $\fS$ with norm at most $1$.
\end{enumerate}
\end{theorem}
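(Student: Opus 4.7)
\emph{Proof plan.} The equivalence hinges on combining the Jordan decomposition of self-adjoint linear functionals on $\fA$ with Hahn-Banach extensions of self-adjoint functionals from $\fS$. In both directions the core issue is bridging the one-sided maximization behind exposedness and the two-sided $|\phi(s)|<1$ behind peaking.

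For (i) $\Rightarrow$ (ii): let $s\in\fS$ satisfy $s=s^{*}$, $\|s\|=\psi(s)=1$, and $|\phi(s)|<1$ for every state $\phi\neq\psi$ on $\fA$. The unique extension property is immediate, since any state $\phi$ with $\phi|_{\fS}=\psi|_{\fS}$ must satisfy $\phi(s)=1$, forcing $\phi=\psi$ by peaking. For exposedness, write $K$ for the weak-$*$ compact set of self-adjoint norm-$\leq 1$ functionals on $\fS$, and suppose $\lambda\in K$ satisfies $\lambda(s)=1$. Extending $\lambda$ by Hahn-Banach and symmetrizing via $\widetilde\lambda=(\widetilde\lambda_{0}+\widetilde\lambda_{0}^{*})/2$ yields a self-adjoint $\widetilde\lambda$ on $\fA$ with $\|\widetilde\lambda\|\leq 1$ restricting to $\lambda$. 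Apply the Jordan decomposition $\widetilde\lambda=\mu_{+}-\mu_{-}$ with $\|\mu_{+}\|+\|\mu_{-}\|=\|\widetilde\lambda\|\leq 1$: the identity $\widetilde\lambda(s)=1$ forces equality throughout $\mu_{+}(s)-\mu_{-}(s)\leq\|\mu_{+}\|+\|\mu_{-}\|\leq 1$, so $\mu_{+}(s)=\|\mu_{+}\|$, $\mu_{-}(s)=-\|\mu_{-}\|$, and $\|\mu_{+}\|+\|\mu_{-}\|=1$. If $\mu_{-}\neq 0$, then $\mu_{-}/\|\mu_{-}\|$ is a state with value $-1$ at $s$, so peaking forces it to equal $\psi$, contradicting $\psi(s)=1$. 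Hence $\mu_{-}=0$, $\|\mu_{+}\|=1$, and the state $\mu_{+}$ itself satisfies $\mu_{+}(s)=1$, so $\mu_{+}=\psi$. Thus $\lambda=\psi|_{\fS}$, and $\psi|_{\fS}$ is the unique maximizer on $K$ of the weak-$*$ continuous functional $\lambda\mapsto\lambda(s)$.

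For (ii) $\Rightarrow$ (i): a weak-$*$ continuous functional exposing $\psi|_{\fS}$ on $K$ has the form $\lambda\mapsto\lambda(x)$ for some $x\in\fS$, and passing to $(x+x^{*})/2\in\fS$ we may assume the exposing element $s\in\fS$ is self-adjoint. For self-adjoint $s$ the supremum of $\lambda(s)$ over $K$ equals $\|s\|$, attained by the restriction of a pure state $\rho$ on $\fA$ (or of $-\rho$) achieving an extremal spectral value of $s$ via Hahn-Banach; uniqueness of the maximizer therefore forces $\psi(s)=\|s\|$, and after rescaling we have $\|s\|=\psi(s)=1$. To check peaking, let $\phi\neq\psi$ be a state on $\fA$ with $|\phi(s)|=1$. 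If $\phi(s)=1$, then $\phi|_{\fS}\in K$ also maximizes the functional, so $\phi|_{\fS}=\psi|_{\fS}$ by exposedness, and the unique extension property yields $\phi=\psi$, contradicting $\phi\neq\psi$. If $\phi(s)=-1$, then $-\phi|_{\fS}\in K$ maximizes it, so $-\phi|_{\fS}=\psi|_{\fS}$; evaluating at $1\in\fS$ gives $-1=1$, a contradiction. Thus $|\phi(s)|<1$ for every such $\phi$, and $s$ is a peaking element.

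The main obstacle throughout is this asymmetry between exposedness (a single supremum attained uniquely) and peaking (absolute value bounded strictly below $1$): one must rule out an ``antipodal'' functional at $\phi(s)=-1$. The negative part of the Jordan decomposition handles it in (i) $\Rightarrow$ (ii), while in (ii) $\Rightarrow$ (i) the normalization $\phi(1)=1$ shared by all states provides the necessary contradiction. Obtaining a self-adjoint exposing element and calibrating $\|s\|=\psi(s)=1$ via the spectral identity $\sup_{\lambda\in K}\lambda(s)=\|s\|$ is the one technical step that deserves careful verification but presents no serious difficulty.
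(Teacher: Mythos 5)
Your proof is correct and follows essentially the same route as the paper: a Hahn--Banach extension made self-adjoint plus the Jordan decomposition with $\|\Theta\|=\|\Theta_1\|+\|\Theta_2\|$ for (i)$\Rightarrow$(ii), and for (ii)$\Rightarrow$(i) the self-adjointization of the exposing element, the computation $\|s\|=\psi(s)=1$ via states, the unique extension property, and evaluation at the unit to exclude the antipodal functional with $\phi(s)=-1$. The only cosmetic difference is that you treat the equality case directly (forcing $\mu_-=0$ and $\mu_+=\psi$ for any maximizer), whereas the paper proves the two-sided bound $|\Lambda(s)|<1$ for every $\Lambda\neq\pm\psi|_{\fS}$ via a short case analysis; both arguments deliver the same weak-$*$ exposedness statement.
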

We consider another flavour of peaking behaviour, heavily inspired by the work of Hay \cite{hay2007}. A projection $p\in \fA^{**}$ is said to be \emph{$\fS$-peaking} if there is a self-adjoint element $s\in \fS$ with $\|s\|=1$ such that $ sp=p$ and $| \phi(s)|<1$ whenever $\phi$ is a state on $\fA$ with support projection $\fs_\phi\in \fA^{**}$ orthogonal to $p$.  We elucidate the relationship between $\fS$-peaking states and $\fS$-peaking projections in the following (Theorem \ref{T:peakHay}).

\begin{theorem}\label{T:main2}
Let $\fA$ be a unital $\rC^*$-algebra and let $\fS\subset \fA$ be an operator system. Then, a pure state $\omega$ on $\fA$ is $\fS$-peaking if and only if its support projection $\fs_\omega\in \fA^{**}$ is $\fS$-peaking.
\end{theorem}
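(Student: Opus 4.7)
The plan is to show that the same self-adjoint element $s\in\fS$ serves as a witness for both peaking conditions, exploiting the spectral projection $q=\chi_{\{1\}}(s)\in\fA^{**}$ and the fact that the support projection of a pure state is minimal in the enveloping von Neumann algebra (equivalently, $\fs_\omega\fA^{**}\fs_\omega=\bC\fs_\omega$).

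For the forward direction, assume $\omega$ is $\fS$-peaking via $s$. Since $\|s\|=1$ and $\omega(s)=1$, the element $1-s\in\fA^{**}$ is positive and satisfies $\omega(1-s)=0$, so $(1-s)\fs_\omega=0$, i.e., $s\fs_\omega=\fs_\omega$. Moreover any state $\phi$ on $\fA$ with $\fs_\phi\perp\fs_\omega$ must differ from $\omega$ (since $\fs_\omega\neq 0$), and hence $|\phi(s)|<1$ by hypothesis. Thus $s$ witnesses that $\fs_\omega$ is $\fS$-peaking; purity of $\omega$ is not actually needed in this direction.

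For the backward direction, which is the substantive half, assume $\fs_\omega$ is $\fS$-peaking via $s$. Because $s\fs_\omega=\fs_\omega$ we have $\fs_\omega\leq q$, so $q':=q-\fs_\omega$ is a projection orthogonal to $\fs_\omega$ satisfying $sq'=q'$ and thus $q'sq'=q'$. The identity $\omega(s)=\omega(\fs_\omega s\fs_\omega)=\omega(\fs_\omega)=1$ is immediate. Now suppose for contradiction that some state $\phi\neq\omega$ on $\fA$ satisfies $|\phi(s)|=1$. If $\phi(s)=-1$, then $\fs_\phi\leq\chi_{\{-1\}}(s)$, which is orthogonal to $q\geq\fs_\omega$, so $\fs_\phi\perp\fs_\omega$, contradicting the peaking hypothesis on $s$. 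If $\phi(s)=1$, then $\fs_\phi\leq q$. Set $\alpha:=\phi(q')$. If $\alpha>0$, the formula
\[
\phi_2(a):=\frac{\phi(q'aq')}{\alpha},\qquad a\in\fA^{**},
\]
defines a normal state on $\fA^{**}$, which restricts to a state on $\fA$ whose support projection is dominated by $q'$ and therefore orthogonal to $\fs_\omega$; however $\phi_2(s)=\phi(q')/\alpha=1$, again contradicting the hypothesis. If instead $\alpha=0$, then $\phi(\fs_\omega)=\phi(q)=1$, so $\fs_\phi\leq\fs_\omega$; minimality of $\fs_\omega$ (from purity of $\omega$) forces $\fs_\phi=\fs_\omega$, and the identity $\fs_\omega a\fs_\omega\in\bC\fs_\omega$ then yields $\phi=\omega$, contradicting $\phi\neq\omega$.

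The main obstacle lies in the $\phi(s)=1$ case of the backward direction: manufacturing the auxiliary normal state $\phi_2$ whose support is manifestly orthogonal to $\fs_\omega$ is the delicate manoeuvre, and it is precisely here that the minimality of the support projection of a pure state must be invoked, in order to rule out the ``mixed'' scenario $0<\phi(\fs_\omega)<1$ that would otherwise prevent us from reducing to the orthogonality hypothesis on $s$.
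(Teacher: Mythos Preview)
Your proof is correct, but the backward direction follows a genuinely different route from the paper's.

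The paper proves the backward implication via an auxiliary weak-$*$ convergence result (Lemma~\ref{L:weak*limit}): using that $\fs_\omega$ peaks on $s$, one shows that the compression $(I-\fs_\omega)s(I-\fs_\omega)$ is a completely non-unitary self-adjoint contraction, and hence $s^n\to\fs_\omega$ weak-$*$ in $\fA^{**}$. Then, if $|\phi(s)|=1$, the element $s$ lies in the multiplicative domain of $\phi$, so $|\phi(s^n)|=1$ for all $n$, and passing to the limit yields $|\widehat\phi(\fs_\omega)|=1$; Lemma~\ref{L:distinctpure} then forces $\phi=\omega$.

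Your argument bypasses Lemma~\ref{L:weak*limit} entirely. Instead you work with the spectral projection $q=\chi_{\{1\}}(s)$, split any putative state $\phi$ with $\phi(s)=1$ according to the decomposition $q=\fs_\omega+q'$, and manufacture the auxiliary normal state $\phi_2(\cdot)=\widehat\phi(q'\,\cdot\,q')/\widehat\phi(q')$ to reduce to the orthogonality hypothesis on $s$. This is more elementary in that it avoids the dilation-theoretic input (the Sz.-Nagy--Foias theory of absolutely continuous contractions used in Lemma~\ref{L:weak*limit}), at the cost of a slightly more hands-on case analysis. Both approaches ultimately hinge on the minimality of $\fs_\omega$ (Lemma~\ref{L:distinctpure}).

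Two minor points of hygiene: throughout you should write $\widehat\phi(q')$ rather than $\phi(q')$, since $q'\in\fA^{**}$; and when you assert that the support projection of $\phi_2|_\fA$ is dominated by $q'$, you are implicitly using that $\phi_2$ is normal on $\fA^{**}$ (so that $\widehat{\phi_2|_\fA}=\phi_2$), which is worth stating explicitly.
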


In Section \ref{S:aspeaking}, in preparation for studying a local version of hyperrigidity and motivated by concrete examples, we introduce a more global type of peaking phenomenon for states. Roughly speaking, we wish to find, within a unital $\rC^*$-algebra $\fA$, an approximate version of the characteristic function corresponding to a given state $\psi$ on $\fA$. More precisely, given a sequence $(\Delta_n)_n$ in $\fA$ such that $\|\Delta_n\|=1$ for every $n\in \bN$, we say that $(\Delta_n)_n$ is a \emph{characteristic sequence} for $\psi$ if 
\[
\lim_{n\to \infty }\psi(\Delta_n)=1
\]
and
\[
\limsup_{n\to\infty}\|\Delta_n^* a \Delta_n\|\leq |\psi(a)|, \quad a\in \fA.
\]
We show that states that admit a characteristic sequence share several properties with $\fS$-peaking states and can be thought of as being ``asymptotically $\fS$-peaking" (Theorem \ref{T:gapeakSchwarz}).

\begin{theorem}\label{T:main3}
Let $\fA$ be a unital $\rC^*$-algebra and let $\psi$ a state on $\fA$. Let $(\Delta_n)_n$ be a characteristic sequence for $\psi$. 
The following statements hold.
\begin{enumerate}

\item[\rm{(1)}]  We have that 
\[
\lim_{n\to\infty}\|\Delta_n a \Delta_n^*\|=|\psi(a)|
\]
for every $a\in \fA$.

\item[\rm{(2)}]  We have that 
\[
\limsup_{n\to\infty}|\phi(\Delta_n)|<1
\]
for every state $\phi$ on $\fA$ such that $\phi\neq \psi$.

\item[\rm{(3)}] The state $\psi$ is pure. 

\item[\rm{(4)}] Let $\fS\subset \fA$ be an operator system and assume that $\Delta_n\in \fS$ for each $n\in \bN$. Then, $\psi$ has the unique extension property with respect to $\fS$.
\end{enumerate}
\end{theorem}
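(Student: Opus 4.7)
The plan is to establish (2) first by a Cauchy--Schwarz / GNS argument, from which (3) and (4) follow readily, and then to prove (1) using the purity obtained in (3).

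For (2), I argue by contradiction: suppose $\phi$ is a state on $\fA$ with $\phi\neq\psi$ and $\limsup_n|\phi(\Delta_n)|=1$, and pass to a subsequence with $\phi(\Delta_{n_k})\to\alpha$, $|\alpha|=1$. Cauchy--Schwarz gives $|\phi(\Delta_{n_k})|^2\leq\phi(\Delta_{n_k}^*\Delta_{n_k})\leq 1$, forcing $\phi(\Delta_{n_k}^*\Delta_{n_k})\to 1$, and a standard GNS computation then yields $\pi_\phi(\Delta_{n_k})\xi_\phi\to\alpha\,\xi_\phi$ and $\pi_\phi(\Delta_{n_k}^*)\xi_\phi\to\bar\alpha\,\xi_\phi$ in norm. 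For any $a\in\fA$ this gives
\[
\phi(\Delta_{n_k}^* a \Delta_{n_k}) = \langle\pi_\phi(a)\pi_\phi(\Delta_{n_k})\xi_\phi,\pi_\phi(\Delta_{n_k})\xi_\phi\rangle\to|\alpha|^2\phi(a)=\phi(a),
\]
while the characteristic sequence hypothesis bounds $|\phi(\Delta_{n_k}^*a\Delta_{n_k})|\leq\|\Delta_{n_k}^*a\Delta_{n_k}\|\to|\psi(a)|$. Hence $|\phi(a)|\leq|\psi(a)|$ for every $a\in\fA$, which forces $\ker\psi\subset\ker\phi$ and then $\phi=\psi$ after normalization, a contradiction.

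Statements (3) and (4) are immediate corollaries. If $\psi = t\psi_1+(1-t)\psi_2$ with $t\in(0,1)$, examining $t\psi_1(\Delta_n)+(1-t)\psi_2(\Delta_n)\to 1$ shows each $\psi_i(\Delta_n)\to 1$, so (2) forces $\psi_i=\psi$ and $\psi$ is pure. Assuming $\Delta_n\in\fS$, any state $\phi$ on $\fA$ extending $\psi|_\fS$ satisfies $\phi(\Delta_n)=\psi(\Delta_n)\to 1$, so $\phi=\psi$ by (2), which is the unique extension property.

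For (1), the lower bound is the mirror of the (2) argument applied to $\psi$ itself: $\pi_\psi(\Delta_n^*)\xi_\psi\to\xi_\psi$ in norm gives $\psi(\Delta_n a \Delta_n^*)\to\psi(a)$ and hence $\liminf_n\|\Delta_n a \Delta_n^*\|\geq|\psi(a)|$. For the upper bound, I use purity from (3) to pass to the bidual: the support projection $\fs_\psi\in\fA^{**}$ is minimal, so $\fs_\psi x\fs_\psi = \psi(x)\fs_\psi$ for $x\in\fA^{**}$. Decomposing $\Delta_n$ into the four corners determined by $\fs_\psi$ and $1-\fs_\psi$, the two off-diagonal pieces $\fs_\psi\Delta_n(1-\fs_\psi)$ and $(1-\fs_\psi)\Delta_n\fs_\psi$ have norm squared equal to $\psi(\Delta_n\Delta_n^*)-|\psi(\Delta_n)|^2$ and $\psi(\Delta_n^*\Delta_n)-|\psi(\Delta_n)|^2$, both tending to zero. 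The hypothesis then transfers to $\limsup_n\|D_n^* a D_n\|\leq|\psi(a)|$ for $a\in\fA$, where $D_n=(1-\fs_\psi)\Delta_n(1-\fs_\psi)$; approximating $1-\fs_\psi$ inside $\fA$ by positive contractions on which $\psi$ is small (Kaplansky density) should force $\|D_n\|\to 0$. Once $\Delta_n\to\fs_\psi$ in norm in $\fA^{**}$, one concludes $\|\Delta_n a \Delta_n^*\|\to\|\fs_\psi a\fs_\psi\|=|\psi(a)|$.

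The step I expect to be the main obstacle is precisely this last one: the hypothesis controls $\|\Delta_n^* a \Delta_n\|$ only for $a\in\fA$, whereas the natural witness $1-\fs_\psi$ sits in $\fA^{**}\setminus\fA$, and transporting the norm estimate across that gap requires a careful argument because $\sigma$-strong convergence in the bidual does not preserve operator norms.
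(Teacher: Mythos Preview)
Your arguments for (2), (3), and (4) are correct and essentially match the paper's. The paper isolates the key step as a separate lemma (Lemma~4.2): if $\lim_n|\phi(\Delta_n)|=1$ for a characteristic sequence, then $\phi(a)=\lim_n\phi(\Delta_n^*a\Delta_n)$ and hence $\phi=\psi$. The paper proves this via a $2\times 2$ matrix Schwarz inequality rather than the GNS picture, but the two computations are equivalent (your vector convergence $\pi_\phi(\Delta_{n_k})\xi_\phi\to\alpha\xi_\phi$ is exactly what the matrix inequality encodes). The deductions of (3) and (4) from (2) are then identical to yours.

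For (1), you have been misled by what is almost certainly a typo in the statement: it should read $\|\Delta_n^* a\Delta_n\|$, not $\|\Delta_n a\Delta_n^*\|$. Indeed, the paper's own proof establishes only the former: the upper bound $\limsup_n\|\Delta_n^* a\Delta_n\|\le|\psi(a)|$ is literally condition~(c) in the definition of a characteristic sequence, and the lower bound comes from applying the lemma above to $\phi=\psi$ to get $|\psi(a)|=\lim_n|\psi(\Delta_n^*a\Delta_n)|\le\liminf_n\|\Delta_n^*a\Delta_n\|$. That is all there is to (1); no bidual machinery is needed.

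Your attempted route to an upper bound for $\|\Delta_n a\Delta_n^*\|$ has a genuine gap, and in fact the conclusion you are aiming for (that $\Delta_n\to\fs_\psi$ in \emph{norm} in $\fA^{**}$) is false in the examples of the paper. Since $\fA$ is norm-closed in $\fA^{**}$, norm convergence of $(\Delta_n)\subset\fA$ to $\fs_\psi$ would force $\fs_\psi\in\fA$; but in Example~4.5 (the Toeplitz algebra) the support projection of a character at a boundary point is not in $\fT_d$, and the sequence $\Delta_n=M_{\phi_\zeta}^n$ does not converge in norm. The Kaplansky density step you flagged is exactly where this breaks: approximating $1-\fs_\psi$ strongly by elements of $\fA$ gives no control on operator norms, so you cannot transport the estimate $\limsup_n\|\Delta_n^* a\Delta_n\|\le|\psi(a)|$ from $a\in\fA$ to $a=1-\fs_\psi$. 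Once the typo is corrected, this entire discussion becomes unnecessary.
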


The asymptotic notion of characteristic sequence is flexible enough to occur in natural and important examples such as the higher-dimensional Toeplitz algebra, which is a ubiquitous object in multivariate operator theory, dilation theory and function theory (Example \ref{E:Toeplitz}).

Finally, in Section \ref{S:local} we exploit characteristic sequences to localize a $\rC^*$-algebra at a given state, and we establish the following (Theorem \ref{T:gapeaklocalhr}).

\begin{theorem}\label{T:main4}
Let $\fA$ be a unital  $\rC^*$-algebra and let $\fS\subset \fA$ be an operator system. Let $\pi:\fA\to B(\H)$ be a unital $*$-representation and let $\Pi:\fA\to B(\H)$ be a unital completely positive extension of $\pi|_{\fS}$. Let $\psi$ be a state on $\fA$ which admits a characteristic sequence $(\Delta_n)_n$ in $\fS$. Then, we have
\[
\lim_{n\to\infty} \|\pi(\Delta_n)^*(\Pi(a)-\pi(a))\pi(\Delta_n)\|=0
\]
for every $a\in \fA$.
\end{theorem}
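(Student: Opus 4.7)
The plan is to argue by contradiction, using the sandwich $\pi(\Delta_n)^*(\cdot)\pi(\Delta_n)$ as a localization device at $\psi$ and invoking the unique extension property furnished by Theorem \ref{T:main3}(4). First, by splitting $a = (a+a^*)/2 + i(a-a^*)/(2i)$ and applying the triangle inequality, one may assume $a = a^*$, in which case $\pi(\Delta_n)^*(\Pi(a) - \pi(a))\pi(\Delta_n)$ is self-adjoint. Suppose the desired limit fails: there exist $\delta > 0$ and, after passing to a subsequence, unit vectors $\xi_n \in \H$ with
\[
|\langle (\Pi(a) - \pi(a))\mu_n, \mu_n\rangle| \geq \delta/2, \qquad \mu_n := \pi(\Delta_n)\xi_n.
\]
Since $\pi$ and $\Pi$ are both contractive, a crude bound yields $\|\mu_n\|^2 \geq \delta/(4\|a\|) > 0$, so $\|\mu_n\|$ is bounded below by a positive constant along the subsequence.

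Next I would introduce the states
\[
\rho_n(x) := \frac{\langle \pi(x)\mu_n,\mu_n\rangle}{\|\mu_n\|^2}, \qquad \sigma_n(x) := \frac{\langle \Pi(x)\mu_n,\mu_n\rangle}{\|\mu_n\|^2}
\]
on $\fA$; these coincide on $\fS$ because $\Pi|_\fS = \pi|_\fS$. The key claim is that $\rho_n \to \psi$ pointwise on $\fA$. For any $x \in \fA$, set $y = x - \psi(x) \cdot 1$, so $\psi(y) = 0$; the characteristic sequence property then forces $\|\Delta_n^* y \Delta_n\| \to 0$. Using that $\pi$ is a $*$-representation,
\[
|\rho_n(y)| = \frac{|\langle \pi(\Delta_n^* y \Delta_n)\xi_n, \xi_n\rangle|}{\|\mu_n\|^2} \leq \frac{\|\Delta_n^* y \Delta_n\|}{\|\mu_n\|^2} \to 0,
\]
so $\rho_n(x) = \rho_n(y) + \psi(x) \to \psi(x)$.

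For $(\sigma_n)$, I would appeal to the unique extension property guaranteed by Theorem \ref{T:main3}(4), which is available because $\Delta_n \in \fS$. By Banach-Alaoglu, any weak-$*$ cluster point $\sigma$ of the state sequence $(\sigma_n)$ is itself a state on $\fA$ satisfying $\sigma|_\fS = \lim \rho_n|_\fS = \psi|_\fS$; the unique extension property forces $\sigma = \psi$, and hence $\sigma_n \to \psi$ in the weak-$*$ topology as well. Evaluating at $a$ then yields $\sigma_n(a) - \rho_n(a) \to 0$, contradicting $|\sigma_n(a) - \rho_n(a)| = |\langle (\Pi(a) - \pi(a))\mu_n,\mu_n\rangle|/\|\mu_n\|^2 \geq \delta/2$ dictated by the choice of $\xi_n$.

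The main obstacle I foresee is extracting pointwise weak-$*$ convergence $\rho_n \to \psi$ from the loose characteristic sequence estimate $\limsup \|\Delta_n^* a \Delta_n\| \leq |\psi(a)|$; once this is achieved via the shift $y = x - \psi(x) \cdot 1$, the unique extension property bridges the $\pi$-side to the $\Pi$-side essentially for free.
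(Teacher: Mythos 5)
Your argument is correct, but it takes a genuinely different route from the paper. The paper proves this statement via Lemma \ref{L:gapineq}: after reducing to $a=a^*$ with $\psi(a)=0$, it picks $c_\eps\leq a\leq b_\eps$ in $\fS$ nearly attaining $\sup\{\psi(c):c\in\fS,\,c\leq a\}$ and $\inf\{\psi(b):b\in\fS,\,b\geq a\}$, sandwiches $\pi(\Delta_n)^*\Pi(a)\pi(\Delta_n)$ between $\pi(\Delta_n^*c_\eps\Delta_n)$ and $\pi(\Delta_n^*b_\eps\Delta_n)$ using $\Pi|_\fS=\pi|_\fS$ and positivity, controls these by the characteristic sequence property, and then invokes Arveson's Proposition 6.2, which converts the unique extension property into the equality $\sup\{\psi(c)\}=\psi(a)=\inf\{\psi(b)\}$. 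You instead run a soft compactness argument by contradiction: you localize with the vector states $\rho_n,\sigma_n$ built from $\mu_n=\pi(\Delta_n)\xi_n$ (well defined thanks to your lower bound $\|\mu_n\|^2\geq\delta/(4\|a\|)$, with $\|\mu_n\|\leq 1$ giving the final contradiction), derive $\rho_n\to\psi$ pointwise directly from condition (c) of a characteristic sequence applied to $y=x-\psi(x)1$, and then apply the unique extension property (Theorem \ref{T:main3}(4), legitimately citable here) to the weak-$*$ cluster points of $(\sigma_n)$ — note that since the state space need not be metrizable these cluster points come from subnets, but your "all cluster points equal $\psi$, hence $\sigma_n(a)\to\psi(a)$" reasoning is sound. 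What each approach buys: yours bypasses the order-theoretic envelope result of Arveson entirely, using the unique extension property in its most direct form, and is arguably more elementary; the paper's quantitative sandwich argument, on the other hand, simultaneously yields the companion estimate $\limsup_n\|(\Pi(a)-\pi(a))\pi(\Delta_n)\|\leq 2\dist(\sigma_\psi(a),\sigma_\psi(\fS))$ of Theorem \ref{T:gapeaklocalhr}, which your contradiction scheme does not produce.
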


In view of part (3) of Theorem \ref{T:main3}, this result can be viewed as supporting evidence for Arveson's conjecture, as it establishes a local form of the desired statement. Note that although the conclusion is merely local, so is the hypothesis. Thus, the theorem appears to be relevant even in cases where the conjecture has been verified. Moreover, it generalizes Arveson's local hyperrigidity theorem \cite[Theorem 11.1]{arveson2011} that applies to commutative $\rC^*$-algebras, as we show in detail at the end of the paper.

\textbf{Acknowledgements.} The author wishes to thank the referee for several insightful suggestions.

\section{Background and preliminary results}\label{S:prelim}

\subsection{ Operator systems and completely positive maps}\label{SS:ucp}
Throughout the paper, $\H$ will denote a Hilbert space and $B(\H)$ will denote the bounded linear operators on it. We now briefly review the basics of operator systems and completely positive maps. The reader may wish to consult \cite{paulsen2002} for greater detail.

Let $\fA$ be a unital $\rC^*$-algebra. A unital self-adjoint subspace $\fS\subset \fA$ is called an \emph{operator system}. As is well-known, it is possible to define operator systems in a more abstract fashion that does not rely on an embedding inside of a unital $\rC^*$-algebra \cite[Theorem 4.4]{CE1977}, but the previous concrete definition is sufficient for our purposes. For each $n\in \bN$, we may view the space $\bM_n(\fS)$ of $n\times n$ matrices with entries from $\fS$ as a self-adjoint subspace of $\bM_n(\fA)$, and in particular there is an associated notion of positivity for elements of $\bM_n(\fS)$. 

A linear map $\phi:\fS\to B(\H)$ is said to be \emph{completely positive} if $\phi^{(n)}$ is positive for every $n\in \bN$. Here, given $n\in \bN$, we denote by $\phi^{(n)}:\bM_n(\fS)\to B(\H^{(n)})$ the induced linear map defined as
\[
\phi^{(n)}([s_{ij}]_{i,j})=[\phi(s_{ij})_{i,j}], \quad [s_{ij}]_{i,j}\in \bM_n(\fS).
\]
If $\phi$ is a unital completely positive map on $\fA$, then it satisfies the \emph{Schwarz inequality}:
\[
\phi^{(n)}(A)^*\phi^{(n)}(A)\leq \phi^{(n)}(A^*A)
\]
for each $A\in \bM_n(\fA)$ and $n\in \bN$.
We will use this inequality repeatedly throughout the paper. 

\subsection{Extensions of completely positive maps and hyperrigidity}\label{SS:hyper}

Let $\fA$ be a unital $\rC^*$-algebra and let $\fS\subset \fA$ be an operator system. Given a unital completely positive map $\phi:\fS\to B(\H)$, by virtue of Arveson's extension theorem there is a unital completely positive map $\Phi:\fA\to B(\H)$ which extends $\phi$. In general, the extension is not unique. Accordingly, we say that a unital completely positive map $\psi:\fA\to B(\H)$ has the \emph{unique extension property} with respect to $\fS$ if it is the unique completely positive extension of $\psi|_{\fS}$ to $\fA$. We advise the reader to exercise some care: in other works (such as \cite{arveson2008}) the use of the terminology ``unique extension property" is reserved for $*$-representations of $\rC^*(\fS)$. In our context, we found our more lenient definition to be more convenient, but the reader should keep this discrepancy in mind throughout. The unique extension property is closely related to the phenomenon of hyperrigidity for operator systems that was defined in the introduction. Indeed, it follows from \cite[Theorem 2.1]{arveson2011} that an operator system  $\fS$ is hyperrigid if and only if every unital $*$-representation of $\rC^*(\fS)$ has the unique extension property with respect to $\fS$.

In the special case where we have an irreducible $*$-representation $\pi:\fA\to B(\H)$ which has the unique extension property with respect to $\fS$, then we say that $\pi$ is a \emph{boundary representation for $\fS$}. This is motivated by the definition of the Choquet boundary of a uniform algebra, as described in the introduction. A basic problem is to determine to which extent the boundary representations of $\rC^*(\fS)$ determine the hyperrigidity of $\fS$, in the spirit of the theorems of Korovkin and \v Sa\v skin mentioned earlier. This is the content of the following conjecture formulated in \cite{arveson2011}.

\vspace{3mm}

\emph{Arveson's hyperrigidity conjecture.} An operator system $\fS$ is hyperrigid if and only if every irreducible $*$-representation of $\rC^*(\fS)$ is a boundary representation for $\fS$.

\vspace{3mm}

In the case where $\fS$ is separable and $\rC^*(\fS)$ is commutative, Arveson managed to establish a local version of this conjecture \cite[Theorem 11.1]{arveson2011}. Since this is one of the main motivations for our work, we wish to state this particular result precisely. First, we set up some notation.

Let $(X,\rho)$ be a compact metric space and let $\fS\subset C(X)$ be an operator system (a \emph{function system}) such that $\rC^*(\fS)=C(X)$. We know that the irreducible $*$-representations of $C(X)$ are precisely the characters of evaluation at points of $X$. In particular, the character of evaluation at $x\in X$ is a boundary representation for $\fS$ if and only if $x$ lies in the Choquet boundary of $\fS$  (the Choquet boundary of a function system is defined in a manner completely analogous to that of a uniform algebra).

Let $\H$ be a separable Hilbert space and let $\pi:\rC^*(\fS)\to B(\H)$ be a unital $*$-representation. Put $\fM_\pi=\pi(\rC^*(\fS))''$. Then, there is a weak-$*$ homeomorphic $*$-isomorphism $\Theta:\fM_\pi\to L^\infty(X,\mu)$ for some Borel probability measure $\mu$ with support equal to $X$. For each $\Omega\subset X$, we let $\chi_\Omega\in L^\infty(X,\mu)$ denote the characteristic function of $\Omega$. Moreover, for $x\in X$ and $\delta>0$, we let
\[
B(x,\delta)=\{y\in X:\rho(x,y)<\delta\}.
\]
Then, for $x\in X$ and $\delta>0$ we put
\[
E_\pi(x,\delta)=\Theta^{-1}(\chi_{B(x,\delta)})\in \fM_\pi.
\]
The following is a reformulation of \cite[Theorem 11.1]{arveson2011}, which can be easily extracted from its proof.

\begin{theorem}\label{T:arvlochr}
Let $X$ be a compact metric space and let $\fS\subset C(X)$ be an operator system such that $\rC^*(\fS)=C(X)$. Let $\H$ be a separable Hilbert space, let $\pi:\rC^*(\fS)\to B(\H)$ be a unital $*$-representation and let $\Pi:\rC^*(\fS)\to B(\H)$ be a unital completely positive extension of $\pi|_{\fS}$. Let $x\in X$ be a point in the Choquet boundary of $\fS$. Then, 
\[
\lim_{\delta\to 0}\|(\pi(a)-\Pi(a))E_\pi(x,\delta) \|=0, \quad a\in C(X).
\]
\end{theorem}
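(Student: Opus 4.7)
The plan is to fix $a\in C(X)$ and establish, uniformly over unit vectors $\xi$ in the range of $E := E_\pi(x,\delta)$, that both $\|\Pi(a)\xi - a(x)\xi\|$ and $\|\pi(a)\xi - a(x)\xi\|$ tend to $0$ as $\delta\to 0$. Since $\|(\Pi(a)-\pi(a))E\|$ equals the supremum of $\|(\Pi(a)-\pi(a))\xi\|$ over unit $\xi$ in the range of $E$, the triangle inequality will then yield the conclusion.

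To each such $\xi$ I will associate two states on $C(X)$: $\mu_\xi(b) = \langle \pi(b)\xi,\xi\rangle$ and $\omega_\xi(b) = \langle \Pi(b)\xi,\xi\rangle$. Since $\Pi|_\fS = \pi|_\fS$, these two states agree on $\fS$. Via the normal extension of $\pi$ to $\fM_\pi$ and the isomorphism $\Theta$, the state $\mu_\xi$ corresponds to a Borel probability measure $\nu_\xi$ on $X$, and the identity $E\xi = \Theta^{-1}(\chi_{B(x,\delta)})\xi = \xi$ forces $\nu_\xi(X\setminus B(x,\delta)) = 0$. Consequently, for any $f\in C(X)$, continuity of $f$ at $x$ yields
\[
|\mu_\xi(f)-f(x)| = \Bigl|\int (f-f(x))\,d\nu_\xi\Bigr| \leq \sup_{y\in \ol{B(x,\delta)}} |f(y)-f(x)|,
\]
which tends to $0$ as $\delta\to 0$, uniformly in $\xi$. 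In other words, $\mu_\xi\to\operatorname{ev}_x$ in the weak-$*$ topology, uniformly over all unit $\xi$ in the range of $E$.

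The crucial step will be to promote this to a uniform weak-$*$ convergence of $\omega_\xi$ to $\operatorname{ev}_x$ on all of $C(X)$. Suppose for contradiction that there exist $f\in C(X)$, $\eps>0$, a sequence $\delta_n\to 0$, and unit vectors $\xi_n \in E_\pi(x,\delta_n)\H$ such that $|\omega_{\xi_n}(f)-f(x)|\geq \eps$ for all $n$. Since $X$ is compact metric, $C(X)$ is separable and its state space is weak-$*$ sequentially compact; after passing to a subsequence, $\omega_{\xi_n}\to \omega_0$ weak-$*$ for some state $\omega_0$ on $C(X)$. By the previous paragraph, $\omega_{\xi_n}|_\fS = \mu_{\xi_n}|_\fS \to \operatorname{ev}_x|_\fS$, whence $\omega_0|_\fS = \operatorname{ev}_x|_\fS$. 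Because $x$ lies in the Choquet boundary of $\fS$, the evaluation $\operatorname{ev}_x$ admits a unique unital completely positive extension from $\fS$ to $C(X)$, so $\omega_0 = \operatorname{ev}_x$, contradicting $|\omega_0(f)-f(x)|\geq\eps$. This compactness-plus-uniqueness manoeuvre is the main obstacle.

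Granted this uniform convergence, the Schwarz inequality $\Pi(a)^*\Pi(a)\leq \Pi(a^*a)$ gives, for unit $\xi$ in the range of $E$,
\[
\|\Pi(a)\xi - a(x)\xi\|^2 = \|\Pi(a)\xi\|^2 - 2\re(\ol{a(x)}\,\omega_\xi(a)) + |a(x)|^2 \leq \omega_\xi(a^*a) - 2\re(\ol{a(x)}\,\omega_\xi(a)) + |a(x)|^2.
\]
Applying the uniform convergence to $f=a$ and $f=a^*a$, the right-hand side converges to $|a(x)|^2 - 2|a(x)|^2 + |a(x)|^2 = 0$ uniformly in $\xi$. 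The analogous expansion with $\mu_\xi$ in place of $\omega_\xi$ (which needs no Schwarz since $\pi$ is a $*$-homomorphism) yields $\|\pi(a)\xi - a(x)\xi\|\to 0$ uniformly. The triangle inequality then completes the argument.
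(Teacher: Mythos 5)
Your argument is correct, and it takes a genuinely different route from the paper. The paper does not prove Theorem \ref{T:arvlochr} directly: it states it as a reformulation of Arveson's Theorem 11.1, and at the end of the paper it only recovers it in the uniform-algebra case, at a peak point $x_0$, by building the characteristic sequence $\Delta_n=\phi_0^n$ from a peak function and translating between $\|T\pi(\Delta_n)\|$ and $\|TE_\pi(x_0,\delta)\|$ (Equations (\ref{E:specmeasure1}) and (\ref{E:specmeasure2})), so that the statement follows from the noncommutative Theorem \ref{T:gapeaklocalhr}. You instead argue directly with the spectral data of $\pi$: the vector states $\mu_\xi$ localize to measures supported in $\ol{B(x,\delta)}$, the states $\omega_\xi=\langle \Pi(\cdot)\xi,\xi\rangle$ agree with them on $\fS$, and weak-$*$ sequential compactness of the state space of the separable algebra $C(X)$ combined with the uniqueness of the state extension of $\operatorname{ev}_x|_{\fS}$ at a Choquet boundary point forces the uniform convergence $\omega_\xi\to\operatorname{ev}_x$; the Kadison--Schwarz inequality then converts this into the norm estimate. (You do implicitly use that $\Theta$ intertwines $\pi$ with the inclusion $C(X)\subset L^\infty(X,\mu)$, i.e.\ that $E_\pi$ is the spectral measure of $\pi$; this is the intended reading of the setup.) The trade-off is instructive: your compactness-plus-uniqueness argument is elementary, self-contained, and applies to arbitrary function systems and all Choquet boundary points, which need not be peak points (the paper notes this failure, citing Phelps), whereas the paper's characteristic-sequence machinery requires a peaking-type element --- whose existence in general is precisely the paper's open Question --- but in return yields the genuinely noncommutative local hyperrigidity theorem, of which the commutative statement is then a specialization.
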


We should point out that even though the hyperrigidity conjecture has been verified recently in the commutative case \cite{DK2016}, it is not entirely clear to us how to obtain Theorem \ref{T:arvlochr} as a consequence of that seemingly stronger result. 

\subsection{States and support projections}
Let $\fA$ be a unital $\rC^*$-algebra and let $\fS\subset \fA$ be an operator system.
We will frequently be dealing with scalar valued unital completely positive maps on $\fS$, which are called \emph{states}. We will denote by $\S(\fS)$ the \emph{state space} of $\fS$. A state on $\fS$ is \emph{pure} if it is an extreme point of $\S(\fS)$. 

Assume now that we have a state $\phi$ defined on the unital $\rC^*$-algebra $\fA$. The associated GNS \emph{representation} is the essentially unique triple $(\sigma_\phi, \fH_\phi, \xi_\phi)$ consisting of a Hilbert space $\fH_\phi$,  a unital $*$-representation $\sigma_\phi:\fA\to B(\fH_\phi)$ and a cyclic unit vector $\xi_\phi\in \fH_\phi$ with the property that
\[
\phi(a)=\langle \sigma_\phi(a) \xi_\phi, \xi_\phi\rangle, \quad a\in \fA.
\]
It is well-known \cite[Proposition 2.5.4]{dixmier1977} that $\sigma_\phi$ is irreducible precisely when $\phi$ is a pure state on $\fA$. 

One important consequence of the Schwarz inequality applied to the state $\phi$ is that if an element $a\in \fA$ is such that $\|a\|=1$ and $|\phi(a)|=1$, then $|\phi(a)|^2=\phi(a^*a)$ and consequently $a$ belongs to the \emph{multiplicative domain of $\phi$}, in the sense that
\[
\phi(ba)=\phi(b)\phi(a), \quad b\in \fA.
\]
This classical observation is due to Choi \cite[Theorem 3.1]{choi1974} and will be used repeatedly throughout.
Another consequence of the Schwarz inequality is that the set
\[
\{a\in \fA:\phi(a^*a)=0\}
\]
coincides with
\[
\{a\in \fA: \phi(ba)=0, b\in \fA\}
\]
and in particular it is a norm closed left ideal of $\fA$. 

Recall that the bidual $\fA^{**}$ can be identified with the universal enveloping von Neumann algebra of $\fA$, and it contains $\fA$ as a $\rC^*$-subalgebra \cite[Theorem III.2.4]{takesaki2002}. There is a unique weak-$*$ continuous state $\widehat \phi$ on $\fA^{**}$ which extends $\phi$. Then, the set
\[
\fN_\phi=\{x \in \fA^{**}:\widehat \phi(x^* x)=0\}=\{x\in \fA^{**}:\widehat\phi(y x)=0, y\in \fA^{**}\}
\]
is a weak-$*$ closed left ideal of the von Neumann algebra $\fA^{**}$. We infer \cite[Proposition 1.10.1]{sakai1971} that there is a projection $\fs_\phi\in \fA^{**}$ such that
\[
\fN_\phi=\fA^{**}(I-\fs_\phi).
\]
This projection $\fs_\phi$ is called the \emph{support projection} of $\phi$ \cite[Definition 1.14.2]{sakai1971}. We note that $\widehat \phi(\fs_\phi)=1$ and hence $\fs_\phi$ lies in the multiplicative domain of $\widehat \phi$. Thus,
\[
\widehat\phi(\fs_\phi x \fs_\phi)=\widehat\phi(x), \quad x\in \fA^{**}.
\]
We note also that the restriction of $\widehat\phi$ to $\fs_\phi\fA^{**} \fs_\phi$ is \emph{faithful}, that is $\widehat\phi(x^* x)>0$ for every non-zero element $x\in \fA^{**}\fs_\phi$.

 The next result elucidates some properties of support projections of pure states.
 
\begin{lemma}\label{L:distinctpure}
Let $\fA$ be a unital $\rC^*$-algebra and let $\omega$ be a pure state on $\fA$. Then, $\fs_\omega \fA^{**} \fs_\omega=\bC \fs_\omega$. In particular, if $\phi$ is a state on $\fA$ with $\widehat\phi(\fs_\omega)=1$, then $\phi=\omega$.
\end{lemma}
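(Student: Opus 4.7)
The plan is to reduce both assertions to computations in the GNS representation of $\omega$. Let $(\sigma_\omega,\fH_\omega,\xi_\omega)$ be the GNS triple, and let $\widehat\sigma_\omega:\fA^{**}\to B(\fH_\omega)$ be its unique weak-$*$ continuous extension. Because $\omega$ is pure, $\sigma_\omega$ is irreducible, so $\sigma_\omega(\fA)$ is weak-$*$ dense in $B(\fH_\omega)$; together with normality of $\widehat\sigma_\omega$ this forces $\widehat\sigma_\omega$ to be surjective onto $B(\fH_\omega)$. Let $p\in B(\fH_\omega)$ denote the rank-one projection onto $\bC\xi_\omega$.

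The key step is to identify $\widehat\sigma_\omega(\fs_\omega)=p$. The identity $\widehat\omega(x^*x)=\|\widehat\sigma_\omega(x)\xi_\omega\|^2$ shows that $\fN_\omega$ is precisely the preimage under $\widehat\sigma_\omega$ of $\{T\in B(\fH_\omega):T\xi_\omega=0\}=B(\fH_\omega)(I-p)$. Combining this with $\fN_\omega=\fA^{**}(I-\fs_\omega)$ and the surjectivity of $\widehat\sigma_\omega$ yields $B(\fH_\omega)(I-\widehat\sigma_\omega(\fs_\omega))=B(\fH_\omega)(I-p)$, and by uniqueness of the support projection of a weak-$*$ closed left ideal, $\widehat\sigma_\omega(\fs_\omega)=p$.

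Applying $\widehat\sigma_\omega$ to the corner then gives $\widehat\sigma_\omega(\fs_\omega\fA^{**}\fs_\omega)=pB(\fH_\omega)p=\bC p=\widehat\sigma_\omega(\bC\fs_\omega)$. To upgrade this to the equality $\fs_\omega\fA^{**}\fs_\omega=\bC\fs_\omega$, it remains to verify that $\widehat\sigma_\omega$ is injective on this corner: if $x\in\fs_\omega\fA^{**}\fs_\omega$ satisfies $\widehat\sigma_\omega(x)=0$, then $\widehat\omega(x^*x)=0$, and since $x\in\fA^{**}\fs_\omega$, the faithfulness property recalled in the excerpt forces $x=0$.

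For the ``in particular'' clause, suppose $\widehat\phi(\fs_\omega)=1$. Since $\|\fs_\omega\|=|\widehat\phi(\fs_\omega)|=1$ and $\fs_\omega$ is self-adjoint, Choi's observation on multiplicative domains puts $\fs_\omega$ in the multiplicative domain of $\widehat\phi$, so that $\widehat\phi(x)=\widehat\phi(\fs_\omega x\fs_\omega)$ for every $x\in\fA^{**}$. By the first part, $\fs_\omega x\fs_\omega=\lambda(x)\fs_\omega$ for some scalar $\lambda(x)$, and hence $\widehat\phi(x)=\lambda(x)$. The same reasoning applied to $\omega$ (which also satisfies $\widehat\omega(\fs_\omega)=1$) yields $\widehat\omega(x)=\lambda(x)$, so $\widehat\phi=\widehat\omega$, and therefore $\phi=\omega$. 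The delicate point in this plan is the identification $\widehat\sigma_\omega(\fs_\omega)=p$, as this is the only step that uses purity in an essential way.
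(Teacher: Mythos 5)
Your proof is correct. For the identity $\fs_\omega \fA^{**}\fs_\omega=\bC\fs_\omega$ the paper does not give an argument at all: it simply cites the proof of Proposition III.2.16 in Takesaki. What you have done is supply a self-contained proof via the GNS representation, and the steps are sound: the uniqueness of the weak-$*$ continuous extension of $\omega$ gives $\widehat\omega(x)=\langle \widehat\sigma_\omega(x)\xi_\omega,\xi_\omega\rangle$, hence $\fN_\omega=\widehat\sigma_\omega^{-1}\bigl(B(\fH_\omega)(I-p)\bigr)$; surjectivity of $\widehat\sigma_\omega$ onto $\sigma_\omega(\fA)''=B(\fH_\omega)$ (a standard consequence of Kaplansky density plus weak-$*$ compactness of the unit ball of $\fA^{**}$, or of the universal property in Theorem III.2.4 of Takesaki, which the paper already invokes) then identifies $\widehat\sigma_\omega(\fs_\omega)$ with the rank-one projection $p$, and injectivity on the corner is exactly the faithfulness of $\widehat\omega$ on $\fA^{**}\fs_\omega$ recorded in the preliminaries. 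This is essentially the argument behind the cited result, so the mathematical content agrees; what your write-up buys is that the purity of $\omega$ enters transparently, through irreducibility of $\sigma_\omega$ forcing $pB(\fH_\omega)p=\bC p$. For the ``in particular'' clause your argument coincides with the paper's: Choi's multiplicative domain observation gives $\widehat\phi(x)=\widehat\phi(\fs_\omega x\fs_\omega)$, and the corner identity (with the scalar necessarily equal to $\widehat\omega(x)$, since $\widehat\omega(\fs_\omega)=1$) yields $\widehat\phi=\widehat\omega$, hence $\phi=\omega$. The only point worth flagging is that the surjectivity of $\widehat\sigma_\omega$, which you state in one line, is the step carrying the real weight of the identification $\widehat\sigma_\omega(\fs_\omega)=p$; if you want the argument fully self-contained, include the Kaplansky-density justification sketched above.
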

\begin{proof}
It is shown in the proof of \cite[Proposition III.2.16]{takesaki2002} that $\fs_\omega \fA^{**} \fs_\omega=\bC \fs_\omega$. In particular, we see that
\[
\fs_\omega x \fs_\omega=\widehat\omega(x) \fs_\omega, \quad x\in \fA^{**}.
\]
If $\widehat\phi(\fs_\omega)=1$, then $\fs_\omega$ belongs to the multiplicative domain of $\widehat \phi$ and we find
\[
\widehat \omega(x)=\widehat \omega(x)\widehat \phi(\fs_\omega)=\widehat \phi(\fs_\omega x \fs_\omega)=\widehat \phi(x)
\]
for every $x\in \fA^{**}$. We conclude that $\phi=\omega$.

\end{proof}

The next fact we will need concerning support projections is likely well-known to experts, but we could not locate a convenient reference. Accordingly, we provide a detailed proof, as support projections will play a key role in our analysis of peaking states in the next section. Before we state the result, recall that two states $\phi,\psi$ on $\fA$ are said to be \emph{orthogonal} if $\|\psi-\phi\|=2$. This is equivalent to their support projections $\fs_\phi,\fs_\psi\in \fA^{**}$ being orthogonal \cite[Theorem III.4.2]{takesaki2002}.
\begin{proposition}\label{P:purestateortho}
Let $\fA$ be a unital $\rC^*$-algebra. The following statements are equivalent.
\begin{enumerate}
\item[\rm{(i)}] $\fA$ is commutative.

\item[\rm{(ii)}] Any two distinct pure states are orthogonal.
\end{enumerate}
\end{proposition}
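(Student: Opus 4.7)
\emph{Plan.} I would prove the two implications separately, both times leaning on Lemma \ref{L:distinctpure}, which identifies the support projection of a pure state as a minimal projection of $\fA^{**}$ with one-dimensional corner.

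For (i) $\Rightarrow$ (ii), suppose $\fA$ is commutative and let $\omega_1 \neq \omega_2$ be distinct pure states. By Lemma \ref{L:distinctpure}, each $\fs_{\omega_j}$ is minimal in $\fA^{**}$, i.e.\ $\fs_{\omega_j} \fA^{**} \fs_{\omega_j} = \bC\fs_{\omega_j}$. Because $\fA^{**}$ is also commutative, the two projections commute, so $\fs_{\omega_1}\fs_{\omega_2}$ is a projection that lies below both. Minimality then forces it to be either zero or to coincide with both $\fs_{\omega_1}$ and $\fs_{\omega_2}$; the second alternative would give $\fs_{\omega_1} = \fs_{\omega_2}$, whence $\widehat{\omega_1}(\fs_{\omega_2})=1$ and a second application of Lemma \ref{L:distinctpure} would force $\omega_1=\omega_2$, a contradiction. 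Thus $\fs_{\omega_1}\fs_{\omega_2}=0$, and the criterion \cite[Theorem III.4.2]{takesaki2002} (recalled just before the statement) yields $\|\omega_1-\omega_2\|=2$.

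For (ii) $\Rightarrow$ (i) I would argue the contrapositive. If $\fA$ is not commutative, then not every irreducible $*$-representation of $\fA$ can be a character, so there exists an irreducible $*$-representation $\pi:\fA\to B(\H)$ with $\dim\H\ge 2$. Pick linearly independent unit vectors $\xi,\eta\in\H$ with $\langle \xi,\eta\rangle\neq 0$, and form the vector states $\omega_\xi(\cdot)=\langle\pi(\cdot)\xi,\xi\rangle$ and $\omega_\eta(\cdot)=\langle\pi(\cdot)\eta,\eta\rangle$. Both are pure (being vector states of an irreducible representation). They are also distinct: if they coincided, uniqueness of the GNS construction would furnish a unitary intertwiner belonging to $\pi(\fA)'=\bC I$ and sending $\xi$ to $\eta$, contradicting linear independence. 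To finish, I would produce the failure of orthogonality by means of support projections. The unique weak-$*$ continuous $*$-homomorphism $\widetilde \pi:\fA^{**}\to B(\H)$ extending $\pi$ is surjective (its range is the weak-$*$ closure of $\pi(\fA)$, which is $B(\H)$ by Schur's lemma and the double commutant theorem). Applying Lemma \ref{L:distinctpure} once more gives
\[
\widetilde\pi(\fs_{\omega_\xi})\,B(\H)\,\widetilde\pi(\fs_{\omega_\xi})=\widetilde\pi(\fs_{\omega_\xi}\fA^{**}\fs_{\omega_\xi})=\bC\,\widetilde\pi(\fs_{\omega_\xi}),
\]
so $\widetilde\pi(\fs_{\omega_\xi})$ is a non-zero rank-one projection; since $\langle\widetilde\pi(\fs_{\omega_\xi})\xi,\xi\rangle=\widehat{\omega_\xi}(\fs_{\omega_\xi})=1$, it is the orthogonal projection onto $\bC\xi$. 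Likewise, $\widetilde\pi(\fs_{\omega_\eta})$ is the projection onto $\bC\eta$. The product of these rank-one projections is non-zero because $\langle\xi,\eta\rangle\neq 0$, hence $\fs_{\omega_\xi}\fs_{\omega_\eta}\neq 0$ in $\fA^{**}$, and \cite[Theorem III.4.2]{takesaki2002} tells us that $\omega_\xi$ and $\omega_\eta$ are two distinct pure states that are not orthogonal, violating (ii).

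The main obstacle is the identification of $\widetilde\pi(\fs_{\omega_\xi})$ with the rank-one projection onto $\bC\xi$, which requires both the surjectivity of the normal extension of an irreducible representation and the minimal-corner property of Lemma \ref{L:distinctpure}. A more elementary alternative, bypassing the bidual entirely, would rely on Kadison's transitivity theorem to compute
\[
\|\omega_\xi-\omega_\eta\| = \sup_{\substack{T\in B(\H)\\ \|T\|\le 1}}|\langle T\xi,\xi\rangle-\langle T\eta,\eta\rangle| = \|\xi\xi^*-\eta\eta^*\|_1 = 2\sqrt{1-|\langle\xi,\eta\rangle|^2}<2,
\]
yielding the same conclusion.
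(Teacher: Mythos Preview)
Your proof is correct. Both implications work, and the alternative route via the trace-class norm is also valid (though the first equality in that display is really Kaplansky density rather than Kadison transitivity).

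Your approach differs from the paper's in both directions. For (i)$\Rightarrow$(ii), the paper invokes Gelfand theory: pure states on a commutative $\fA$ are characters, and Urysohn's lemma on the spectrum separates distinct characters by a norm-one function, giving $\|\omega_1-\omega_2\|=2$ directly. You instead stay inside $\fA^{**}$ and argue via minimality of support projections, which is slicker in that it avoids the Gelfand transform entirely and uses only Lemma~\ref{L:distinctpure}. For (ii)$\Rightarrow$(i), both arguments begin the same way---pick an irreducible $\pi$ on a space of dimension at least two and two non-orthogonal unit vectors---but diverge in how they show the resulting pure states have non-orthogonal supports. The paper appeals to the construction of the universal representation (from the proof of \cite[Theorem~III.2.4]{takesaki2002}) to realize both states as vector states for non-orthogonal vectors in a faithful representation of $\fA^{**}$. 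You instead push the specific representation $\pi$ forward to its normal extension $\widetilde\pi$ on $\fA^{**}$, use surjectivity onto $B(\H)$ together with Lemma~\ref{L:distinctpure} to identify $\widetilde\pi(\fs_{\omega_\xi})$ and $\widetilde\pi(\fs_{\omega_\eta})$ as the rank-one projections onto $\bC\xi$ and $\bC\eta$, and conclude. Your argument is more self-contained and makes the role of Lemma~\ref{L:distinctpure} explicit; the paper's is shorter but leans on an external structural fact. Your trace-norm alternative is the most elementary of the three and bypasses support projections altogether.
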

\begin{proof}
If $\fA$ is commutative, then the pure states on $\fA$ are characters. Since the character space of $\fA$ is compact and Hausdorff, Urysohn's lemma implies that distinct characters are orthogonal. 

Conversely, recall that the direct sum of all irreducible $*$-representations of $\fA$ is injective \cite[Corollary I.9.11]{davidson1996}. Thus, if $\fA$ is not commutative, we can find an irreducible $*$-representation $\pi:\fA\to B(\H)$ where $\H$ has dimension at least two. Let $\xi,\eta\in \H$ be two unit vectors which are not orthogonal. Define two states $\omega_\xi$ and $\omega_\eta$ on $\fA$ as
\[
\omega_\xi(a)=\langle \pi(a)\xi,\xi\rangle \qand \omega_\eta(a)=\langle \pi(a)\eta,\eta\rangle
\]
for every $a\in \fA$. Since $\pi$ is irreducible, both $\xi$ and $\eta$ are cyclic vectors and it is easily verified that $(\pi,\H,\xi)$ and $(\pi,\H,\eta)$ are unitarily equivalent to the GNS representations of $\omega_\xi$ and $\omega_\eta$ respectively. We conclude that $\omega_\xi$ and $\omega_\eta$ are pure. As shown in the proof of \cite[Theorem III.2.4]{takesaki2002}, we may represent $\fA^{**}$ faithfully on some Hilbert space $\fH$ with the property that there are non-orthogonal unit vectors $\xi',\eta'\in \fH$ such that
\[
\omega_\xi(a)=\langle a\xi',\xi' \rangle \qand \omega_\eta(a)=\langle a\eta',\eta' \rangle
\]
for every $a\in \fA$. In particular, we have that
\[
\widehat{\omega_\xi}(x)=\langle x\xi',\xi' \rangle \qand \widehat{\omega_\eta}(x)=\langle x\eta',\eta' \rangle
\]
for every $x\in \fA^{**}$. Using that $\widehat{\omega_\xi}(\fs_{\omega_\xi})=1$, we find that $\langle \fs_{\omega_\xi}\xi',\xi'  \rangle=1$ and thus $\xi'\in \fs_{\omega_\xi}\fH$. Likewise, we see that $\eta'\in \fs_{\omega_\eta}\fH$. Since $\xi'$ and $\eta'$ are not orthogonal, we infer that the support projections $\fs_{\omega_\xi}$ and  $\fs_{\omega_\eta}$ are not orthogonal, and thus neither are the states $\omega_\xi$ and $\omega_\eta$.
\end{proof}

\section{Peaking states}\label{S:peaking}

In this section, we consider peaking phenomena in non-commutative contexts. There has been some previous work done on this topic; we mention for instance the notions of peaking $*$-representations \cite{arveson2011}, \cite{NPSV2016} or of peaking projections \cite{hay2007}. We opt here to focus our attention on states. After having characterized peaking states, we will clarify their relationship to peaking representations and peaking projections. 

Let $\fA$ be a unital $\rC^*$-algebra and let $\fS \subset \fA$ be an operator system. We say that a state $\psi$ on $\fA$ is $\fS$-\emph{peaking} if there is a self-adjoint element $s\in \fS$ such that $\|s\|=1$ and with the property that 
\[
\psi(s)=1>|\phi(s)|
\]
for every state $\phi$ on $\fA$ such that $\phi\neq \psi$. We then say that $\psi$ \emph{peaks on} $s$.
This simple property imposes strong conditions on the state $\psi$, as we now show.

\begin{lemma}\label{L:peakuep}
Let $\fA$ be a unital $\rC^*$-algebra and let $\fS\subset \fA$ be an operator system. Then, every state on $\fA$ which is $\fS$-peaking must be pure and must have the unique extension property with respect to $\fS$.
\end{lemma}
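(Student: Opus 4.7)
The plan is to read both conclusions straight off the peaking inequality, using only that the peaking element $s\in\fS$ is self-adjoint with norm $1$ and that $\psi$ is the unique state whose value at $s$ equals $1$.

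First I would verify purity. Suppose $\psi = t\phi_1 + (1-t)\phi_2$ for some $0<t<1$ and states $\phi_1,\phi_2$ on $\fA$. Evaluating at $s$ gives $1 = t\phi_1(s) + (1-t)\phi_2(s)$. Since $s$ is self-adjoint both $\phi_i(s)$ are real, and since $\|s\|=1$ both lie in $[-1,1]$. A convex combination of numbers in $[-1,1]$ equals $1$ only if each summand is $1$, so $\phi_1(s)=\phi_2(s)=1$. The peaking condition then forces $\phi_1=\phi_2=\psi$, proving that $\psi$ is an extreme point of $\S(\fA)$.

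Next I would handle the unique extension property with respect to $\fS$. Let $\Phi$ be any state on $\fA$ whose restriction to $\fS$ agrees with $\psi|_{\fS}$. Since $s\in\fS$, this gives $\Phi(s)=\psi(s)=1$, hence $|\Phi(s)|=1$. The contrapositive of the peaking condition says that any state on $\fA$ with $|\Phi(s)|=1$ must equal $\psi$, so $\Phi=\psi$. Thus $\psi|_{\fS}$ has a unique completely positive extension to $\fA$, namely $\psi$ itself.

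There is no real obstacle here; the only subtlety worth explicitly noting is the use of self-adjointness of $s$ in the purity argument to replace $|\phi_i(s)|\le 1$ by $\phi_i(s)\in[-1,1]$, so that the extreme-point structure of $[-1,1]$ can be invoked. Everything else is a direct application of the defining peaking inequality.
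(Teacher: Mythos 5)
Your proof is correct and follows essentially the same route as the paper: the unique extension property is read off from the fact that any state agreeing with $\psi$ on $\fS$ takes the value $1$ at $s$ and hence equals $\psi$, and purity follows by evaluating a convex decomposition of $\psi$ at $s$ and using self-adjointness of $s$ to force each summand to peak as well. The only cosmetic difference is that the paper tests purity with the midpoint decomposition $\psi=\tfrac{1}{2}(\phi_1+\phi_2)$ while you allow a general coefficient $t\in(0,1)$, which changes nothing of substance.
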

\begin{proof}
Let $\psi$ be a state on $\fA$ which is $\fS$-peaking, so that we can find a self-adjoint element $s\in \fS$ with $\|s\|=1$ and such that 
\[
\psi(s)=1>| \phi(s)|
\]
for every state $\phi$ on $\fA$ such that $\phi\neq \psi$. Fix such a state $\phi$.  In particular, we observe that $\psi|_{\fS}\neq \phi|_{\fS}$. This shows that $\psi$ has the unique extension property with respect to $\fS$. Moreover, if we write 
\[
\psi=\frac{1}{2}(\phi_1+\phi_2)
\]
for some states $\phi_1,\phi_2$ on $\fA$, then we must have $\phi_1(s)=\phi_2(s)=1$. By choice of the element $s$, this forces $\psi=\phi_1=\phi_2$ and thus $\psi$ is pure.
\end{proof}

We see that a (necessarily pure) state on $\fA$ which is $\fS$-peaking must admit a so-called \emph{determining element} in $\fS$. This notion apparently has a quantum mechanical interpretation as discussed in \cite{hamhalter2001}, and it is reminiscent of the following familiar convexity concept.  

Let $X$ be a normed space and let $Q$ be a weak-$*$ closed convex subset of the closed unit ball of the dual space $X^*$. Assume that $0\in Q$. Recall that a non-zero element $\Lambda_0\in Q$ is a  \emph{weak-$*$ exposed point}  if there is $x_0\in X$ with the property that
\[
\re \Lambda_0(x_0)=1>\re \Lambda(x_0)
\]
whenever $\Lambda\in Q$ and $\Lambda\neq \Lambda_0$. Arguing as in the proof of Lemma \ref{L:peakuep}, it is straightforward to verify that weak-$*$ exposed points are necessarily extreme points.  Using this concept, we can  characterize $\fS$-peaking states.

\begin{theorem}\label{T:peakingexposed}
Let $\fA$ be a unital $\rC^*$-algebra and let $\fS\subset \fA$ be an operator system.  Let $\psi$ be a state on $\fA$. Then, the following statements are equivalent.
\begin{enumerate}

\item[\rm{(i)}] The state $\psi$ is $\fS$-peaking.

\item[\rm{(ii)}] The state $\psi$ has the unique extension property with respect to $\fS$, and
the restriction $\psi|_{\fS}$ is a weak-$*$ exposed point of  the set of self-adjoint functionals on $\fS$ with norm at most $1$.
\end{enumerate}
\end{theorem}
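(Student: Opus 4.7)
The plan is to use a Jordan-type decomposition for self-adjoint functionals on $\fS$ to bridge the exposedness condition (which lives on $\fS$) with the peaking condition (which lives on $\fA$), with Hahn--Banach extensions serving as the translation mechanism. First I would establish as a preliminary that every self-adjoint $\Lambda$ in the unit ball of $\fS^*$ admits a splitting $\Lambda = \Lambda_+ - \Lambda_-$ into positive functionals on $\fS$ with $\|\Lambda\| = \|\Lambda_+\| + \|\Lambda_-\|$. This is obtained by extending $\Lambda$ self-adjointly to $\fA$ with the same norm (Hahn--Banach, followed by averaging with the natural involution $\mu \mapsto \mu^*$ on the dual), applying the standard Jordan decomposition on the $\rC^*$-algebra, and restricting the two positive parts back to $\fS$; their norms are preserved because a positive functional attains its norm at $1 \in \fS$.

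For (i) $\Rightarrow$ (ii), Lemma \ref{L:peakuep} already supplies the unique extension property, so only the exposedness claim remains, and the natural candidate for an exposing element is the peak $s \in \fS$ itself. Given a self-adjoint $\Lambda \in Q$ with $\Lambda \neq \psi|_\fS$, I would decompose $\Lambda = \Lambda_+ - \Lambda_-$ as above, set $\alpha = \|\Lambda_+\|$ and $\beta = \|\Lambda_-\|$, and use the crude bound $\Lambda(s) \leq \alpha + \beta \leq 1$. For strict inequality: if $\beta > 0$ and $\Lambda_-(s) = -\beta$, then $\Lambda_-/\beta$ extends to a state $\tilde\phi_-$ on $\fA$ with $|\tilde\phi_-(s)| = 1$, which by the peaking property forces $\tilde\phi_- = \psi$, contradicting $\psi(s) = 1 \neq -1$. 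Hence $\Lambda_-(s) > -\beta$ whenever $\beta > 0$. When $\beta = 0$, we have $\Lambda = \Lambda_+$ with $\alpha \leq 1$, and equality $\Lambda_+(s) = \alpha$ together with $\alpha = 1$ forces $\Lambda_+ = \psi|_\fS$ by the analogous extension argument, contradicting $\Lambda \neq \psi|_\fS$; if $\alpha < 1$ the inequality $\Lambda(s) < 1$ is immediate.

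For (ii) $\Rightarrow$ (i), let $x \in \fS$ expose $\psi|_\fS$ and set $s = (x + x^*)/2 \in \fS$; since $\Lambda(s) = \re \Lambda(x)$ for every self-adjoint $\Lambda$, we obtain $\psi(s) = 1$ and $\Lambda(s) < 1$ for every self-adjoint $\Lambda \in Q$ distinct from $\psi|_\fS$. The unique extension property then converts this $\fS$-statement into the desired $\fA$-statement: any state $\phi \neq \psi$ on $\fA$ satisfies $\phi|_\fS \neq \psi|_\fS$, and applying the exposedness to both $\phi|_\fS$ and $-\phi|_\fS$ (both in $Q$, both distinct from $\psi|_\fS$, the latter because $\psi|_\fS(1) = 1$) yields $\phi(s) < 1$ and $-\phi(s) < 1$, i.e., $|\phi(s)| < 1$. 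Finally, since $\|s\| = \sup_\phi |\phi(s)|$ over the states $\phi$ on $\fA$ and this supremum is attained at $\psi$ with value $1$, one concludes $\|s\| = 1$, so $\psi$ peaks on $s$.

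The step I expect to be the main obstacle is establishing the $\fS$-Jordan decomposition with the sharp norm identity $\|\Lambda\| = \|\Lambda_+\| + \|\Lambda_-\|$; once this is in hand, the remaining work amounts to a disciplined use of Hahn--Banach (or Arveson) extensions to shuttle functionals between $\fS$ and $\fA$, combined with the peaking/UEP hypotheses on each side.
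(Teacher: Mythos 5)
Your proposal is correct and takes essentially the same route as the paper: for (i)$\Rightarrow$(ii) both arguments rest on a norm-preserving self-adjoint Hahn--Banach extension to $\fA$ followed by the Jordan decomposition with $\|\Theta\|=\|\Theta_1\|+\|\Theta_2\|$ and an application of the peaking inequality to the normalized positive parts, while for (ii)$\Rightarrow$(i) both use the symmetrization $x\mapsto (x+x^*)/2$, the observation that $\pm\phi|_{\fS}\in Q$, and the unique extension property to pass from states on $\fS$ to states on $\fA$. The only difference is organizational: you package the decomposition as an $\fS$-level Jordan splitting and obtain strictness by excluding the extreme values $\Lambda_-(s)=-\beta$ and $\Lambda_+(s)=\alpha=1$, whereas the paper works directly with the states $\Theta_k/\|\Theta_k\|$ on $\fA$ and notes that at least one of them must differ from $\psi$ --- the same mechanism in a slightly different order.
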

\begin{proof}
Throughout the proof, we let $Q$ denote the set of self-adjoint functionals on $\fS$ with norm at most $1$.

Assume that $\psi$ has the unique extension property with respect to $\fS$ and that $\psi|_{\fS}$ is a weak-$*$ exposed point of $Q$. Then,  there is $x_0\in \fS$ with the property that
\[
\re \psi(x_0)=1>\re \Lambda(x_0)
\]
for every $\Lambda\in Q$ such that $\Lambda\neq \psi|_{\fS}$. Upon replacing $x_0$ by $(x_0+x_0^*)/2$ if necessary, we may assume that $x_0$ is self-adjoint. Hence, we see that
\[
\psi(x_0)=1>\Lambda(x_0)
\]
for every $\Lambda\in Q$ such that $\Lambda\neq \psi|_{\fS}$. Since $-\Lambda\in Q$ whenever $\Lambda\in Q$, we see that
\begin{equation}\label{EQ:exppeak}
\psi(x_0)=1>|\Lambda(x_0)|
\end{equation}
for every $\Lambda\in Q$ such that $\Lambda\neq \pm\psi|_{\fS}$. Thus $|\phi(x_0)|\leq 1$ if $\phi$ is a state on $\fS$, which implies that $\|x_0\|=1$ as $x_0$ is assumed to be self-adjoint. Finally, the unique extension property of $\psi$ with respect to $\fS$ along with Equation (\ref{EQ:exppeak}) shows that $\psi$ is $\fS$-peaking.

Conversely, assume that $\psi$ is $\fS$-peaking, so that there is a self-adjoint element $x_0\in \fS$ with $\|x_0\|= 1$ such that
\[
\psi(x_0)=1>|\phi(x_0)|
\]
whenever $\phi$ is a state on $\fA$ with $\phi\neq \psi$. By Lemma \ref{L:peakuep}, we see that $\psi$ has the unique extension property with respect to $\fS$. It remains only to show that the restriction $\psi|_{\fS}$ is a weak-$*$ exposed point of $Q$. For that purpose, we will show that
\[
|\Lambda(x_0)|<1
\]
whenever $\Lambda\in Q$ with $\Lambda\neq \pm\psi|_{\fS}$. Clearly, it suffices to deal with the case where $\|\Lambda\|=1$ and $\Lambda$ is not of the form $\pm \phi$ for some state $\phi$ on $\fS$. Let $\Theta$ be a functional on $\fA$ with $\|\Theta\|=1$ which agrees with $\Lambda$ on $\fS$. Upon replacing $\Theta$ by its real part if necessary, we may  assume that $\Theta$ is self-adjoint. By means of the Jordan decomposition \cite[Theorem 1.7.12]{conway2000}, we may write 
\[
\Theta=\Theta_1-\Theta_2
\]
for some positive linear functionals $\Theta_1$ and $\Theta_2$ on $\fA$ with the property that 
\[
1=\|\Theta\|=\|\Theta_1\|+\|\Theta_2\|.
\]
Since $\Lambda$ is not of the form $\pm \phi$ for some state $\phi$ on $\fS$, we see that $\pm\Theta_k|_{\fS}\neq \Lambda$ and thus $\|\Theta_k\|\neq 0$ for each $k=1,2$.  Note now that $\Theta_k/\|\Theta_k\|$ is a state on $\fA$ for each $k=1,2$, and
\[
\Theta=\|\Theta_1\| \frac{\Theta_1}{\|\Theta_1\|}+\|\Theta_2\| \frac{(-\Theta_2)}{\|\Theta_2\|}
\]
whence
\[
\Lambda=\|\Theta_1\| \frac{\Theta_1|_{\fS}}{\|\Theta_1\|}+\|\Theta_2\| \frac{(-\Theta_2|_{\fS})}{\|\Theta_2\|}.
\]
Using that $\|\Lambda\|=1$, a straightforward verification yields that either 
\[
\frac{\Theta_1|_{\fS}}{\|\Theta_1\|}\neq \psi|_{\fS} \qor \frac{\Theta_2|_{\fS}}{\|\Theta_2\|}\neq  \psi|_{\fS}.
\] 
By choice of $x_0$, we conclude that
\[
|\Lambda(x_0)|\leq \|\Theta_1\| \frac{|\Theta_1(x_0)|}{\|\Theta_1\|}+\|\Theta_2\| \frac{|\Theta_2(x_0)|}{\|\Theta_2\|}<\|\Theta_1\|+\|\Theta_2\|=1.
\]

\end{proof}

It is natural to wonder just how common states having the unique extension property happen to be. Irreducible $*$-representations with the unique extension property with respect to an operator system are  abundant enough to generate the $\rC^*$-envelope \cite{dritschel2005},\cite{arveson2008},\cite{davidsonkennedy2015}, but a corresponding statement for states is unknown to us. In fact, \cite[Theorem 3.12 and Corollary 5.9]{clouatre2017unp} show that similar questions are closely related to the hyperrigidity conjecture itself.

If a pure state $\psi$ on $\fA$ has the unique extension property with respect to $\fS$, then the restriction $\psi|_{\fS}$ is pure as well \cite[Lemma 3.9]{clouatre2017unp}. By the previous theorem, determining whether $\psi$ is $\fS$-peaking thus reduces to distinguishing weak-$*$ exposed points among the extreme points in the set of self-adjoint functionals on $\fS$ with norm at most $1$.

The following additional remarks appear relevant in view of  Theorem \ref{T:peakingexposed}. First, a classical result of Klee \cite{klee1958} (see also \cite{CD2016predual} for a detailed proof) guarantees that weak-$*$ exposed points are plentiful. Indeed, the set of weak-$*$ exposed points of a weak-$*$ closed convex subset is weak-$*$ dense in the set extreme points. Second, we mention that in the classical case of a uniform algebra, it is known (\cite[Theorem II.11.3]{gamelin1969}) that the Choquet boundary coincide with the set of peak points. The multiplicative structure of the algebra is crucial here, as the equality of these two sets fails for general function systems \cite[page 43]{phelps2001}. Nevertheless, the peak points are still abundant; they are in fact dense in the Choquet boundary \cite[Theorem 8.4]{phelps2001}. 

We exhibit some non-commutative examples related to these ideas.

\begin{example}\label{E:peakingBH}
Let $\H$ be a Hilbert space and let $\fA\subset B(\H)$ be a unital $\rC^*$-algebra which contains the ideal $\fK$ of compact operators on $\H$. Assume that $\fA/\fK$ is not one-dimensional and put $\fS=\fK+\bC I$. Repeating the argument of \cite[Example 4.5]{clouatre2017unp}, we see that the only pure states on $\fA$ that have the unique extension property with respect to $\fS$ are the vector states on $\fA$. We proceed to verify that these vector states are always $\fS$-peaking, regardless of whether $\fA/\fK$ is one-dimensional or not.

Let $\xi\in \H$ be a unit vector and let $\omega$ be the pure vector state on $\fA$ defined as
\[
\omega(a)=\langle a\xi,\xi \rangle, \quad a\in \fA.
\]
Let $s\in \fK$ be the rank-one projection onto $\bC \xi$. Clearly, we have that $\omega(s)=1$. Let now $\phi$ be a state on $\fA$. Standard facts about the representation theory of $\rC^*$-algebras (see the discussion preceding \cite[Theorem I.3.4]{arveson1976inv}) then imply that the GNS representation $\sigma_\phi$ is unitarily equivalent to 
\[
\id^{(\gamma)}\oplus \pi_Q
\]
where $\gamma$ is some cardinal number and $\pi_Q$ is a unital $*$-representation of $\fA$ which annihilates $\fK$.  This shows that there is a positive trace class operator $T\in B(\H)$ and a positive linear functional $\theta$ on $\fA$ satisfying $\fK\subset\ker \theta$ with the property that
\[
\phi(a)=\tr (aT)+\theta(a), \quad a\in \fA.
\]
In particular, we see that $\phi(s)=\tr (sT)$, whence $|\phi(s)|<1$ unless $\theta=0$ and $T=s$, which is equivalent to $\phi=\omega$. Hence $\omega$ is $\fS$-peaking. 

\end{example}

\begin{example}\label{E:peakingmatrixunits}
Let $\bM_2$ denote the $2\times 2$ complex matrices and consider the standard matrix unit $E_{12}\in \bM_2$. Let $\fS\subset \bM_2$ be the operator system generated by $I$ and $E_{12}$.
Note that $\bM_2=\rC^*(\fS)$. It is well-known that up to unitary equivalence, the only irreducible $*$-representation of $\bM_2$ is the identity representation, so that the pure states on $\bM_2$ are the vector states. For each unit vector $\xi\in \bC^2$, we let $\omega_\xi$ be the state on $\bM_2$ defined as
\[
\omega_\xi(a)=\langle a\xi,\xi\rangle, \quad a\in \bM_2.
\]
Observe that $\omega_\xi=\omega_{\xi'}$ if and only if $\xi=\alpha\xi'$ for some $\alpha\in \bC$ with $|\alpha|=1$.

Let $\{e_1,e_2\}$ be the canonical orthonormal basis of $\bC^2$ and assume that
\[
\xi=\xi_1 e_1+\xi_2 e_2
\]
is a unit vector, where $\xi_1,\xi_2\in \bC$. We wish to determine when $\omega_\xi$ has the unique extension property with respect to $\fS$ and when it is $\fS$-peaking. We distinguish two cases.

First, suppose that $\xi_1$ is not of the form $\alpha\ol{\xi_2}$ for some $\alpha\in \bC$ with $|\alpha|=1$. Consider the unit vector
\[
\xi'=\ol{\xi_2}e_1+\ol{\xi_1}e_2.
\]
Then, $\omega_{\xi}\neq \omega_{\xi'}$. However, we note that
\[
\langle E_{12}\xi,\xi\rangle=\xi_2 \ol{\xi_1}=\langle E_{12}\xi',\xi'\rangle
\]
so that $\omega_\xi(E_{12})=\omega_{\xi'}(E_{12})$. This implies that $\omega_{\xi}|_{\fS}=\omega_{\xi'}|_{\fS}$, and we infer that $\omega_\xi$ does not have the unique extension property with respect to $\fS$, and in particular it is not $\fS$-peaking by Lemma \ref{L:peakuep}.

Second, suppose that $\xi_1=\alpha \ol{\xi_2}$ for some $\alpha\in \bC$ with $|\alpha|=1$. Then $\xi_2=\alpha\ol{\xi_1}$ and 
\[
|\xi_1|=|\xi_2|=1/\sqrt{2}.
\]
We infer
\[
\xi_1=\beta/\sqrt{2}, \quad \xi_2=\alpha\ol{\beta}/\sqrt{2}
\]
for some $\beta\in \bC$ with $|\beta|=1$.
We now claim that $\omega_\xi$ is $\fS$-peaking. Indeed, consider
\[
s=\frac{1}{2}\left(I+\ol{\alpha}\beta^2E_{12}+\alpha\ol{\beta}^2E_{21} \right)\in \fS.
\]
An easy calculation shows that $s$ is the rank-one projection onto $\bC\xi$ and thus that
\[
\omega_\xi(a)=\tr(as), \quad a\in \bM_2.
\]
It is clear that $\omega_\xi(s)=1$. Let $\phi$ be a state on $\bM_2$. Then, there is a positive element $T\in \bM_2$ such that $\tr T=1$ and
\[
\phi(a)=\tr(aT), \quad a\in \bM_2.
\]
If $|\phi(sT)|=1$, then a moment's thought reveals that we must have $s=T$ which implies that $\phi=\omega_\xi$. Thus, $\omega_\xi$ is $\fS$-peaking and in particular it has the unique extension property with respect to $\fS$ by Lemma \ref{L:peakuep}.

We have shown that every pure state on $\bM_2$ which has the unique extension property with respect to $\fS$ must be $\fS$-peaking.

\end{example}

Inspired by \cite{hay2007}, we now make the following definition. Let $\fA$ be a unital $\rC^*$-algebra and let $\fS\subset \fA$ be an operator system.  A projection $p\in \fA^{**}$ is said to be \emph{$\fS$-peaking} if there is a self-adjoint element $s\in \fS$ with $\|s\|=1$ such that $ sp=p$ and $| \phi(s)|<1$ whenever $\phi$ is a state on $\fA$ with support projection $\fs_\phi\in \fA^{**}$ orthogonal to $p$. In this case, we also have $ps=p$ and we say that $p$ \emph{peaks on} $s$.

We warn the reader that our notion of $\fS$-peaking projection is slightly different from the corresponding one given in \cite{hay2007}.  We feel our version is more natural in the context of operator systems. This deviation from the convention of \cite{hay2007} is perhaps partly justified by the fact, which we will show next, that with our definition in its current form we have the very natural equivalence between a pure state $\omega$ and its support projection $\fs_\omega$ being $\fS$-peaking. At first glance, this is not completely obvious, as an $\fS$-peaking projection only offers control over a restricted subset of states. Indeed, assuming that the projection $\fs_\omega$ is $\fS$-peaking on $s$, we know that $|\phi(s)|<1$ whenever $\phi$ is a state on $\fA$ with $\fs_\phi \fs_\omega=0$. However, to guarantee that the state $\omega$ is $\fS$-peaking, we need to know that $|\phi(s)|<1$ for every state $\phi$ on $\fA$ which is merely distinct from $\omega$. In light of Proposition \ref{P:purestateortho}, it may thus appear as though the equivalence we seek could only hold in the case where $\fA$ is commutative. Perhaps surprisingly, this is not the case. To show this fact we need the following, which is an adaptation of  \cite[Lemma 3.6]{hay2007} to our alternative definition of peaking projection. The proof is almost identical.

\begin{lemma}\label{L:weak*limit}
Let $\fA$ be a unital $\rC^*$-algebra and let $\fS\subset \fA$ be an operator system. Let $p\in \fA^{**}$ be an $\fS$-peaking projection which peaks on the self-adjoint element $s\in \fS$. Then, the sequence $(s^n)_n$ converges to $p$ in the weak-$*$ topology of $\fA^{**}$.
\end{lemma}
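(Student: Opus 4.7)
\emph{Proof plan.} The plan is to apply the Borel functional calculus inside $\fA^{**}$ to the self-adjoint contraction $s$. Since $\|s\|\leq 1$, the spectrum of $s$ lies in $[-1,1]$, so $s$ admits a spectral measure $E$ taking values in $\fA^{**}$. Set $q_+=E(\{1\})$ and $q_-=E(\{-1\})$. For any state $\phi$ on $\fA$, with unique normal extension $\widehat\phi$ to $\fA^{**}$, the push-forward $\mu_\phi=\widehat\phi\circ E$ is a probability measure on $[-1,1]$ satisfying $\widehat\phi(s^n)=\int_{-1}^1 t^n\,d\mu_\phi(t)$. The goal is to show $q_+=p$ and $q_-=0$, after which dominated convergence finishes the argument: the functions $t\mapsto t^n$ are uniformly bounded by $1$ and converge pointwise to $\chi_{\{1\}}$ on $[-1,1]\setminus\{-1\}$, while $\mu_\phi(\{-1\})=\widehat\phi(q_-)=0$, so $\widehat\phi(s^n)\to \widehat\phi(p)$. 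Since every element of $\fA^*$ is a linear combination of states, this yields the claimed weak-$*$ convergence $s^n\to p$ in $\fA^{**}$.

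The main obstacle I foresee is a lifting step: given a non-zero projection $q\in\fA^{**}$, one must produce a state $\phi$ on $\fA$ with $\fs_\phi\leq q$. This relies on the identification of $\fA^{**}$ with the universal enveloping von Neumann algebra of $\fA$, under which states on $\fA$ correspond bijectively to normal states on $\fA^{**}$. Since $q\fA^{**}q$ is a non-zero von Neumann algebra, it carries a normal state $\psi_0$; the formula $\widehat\phi(x):=\psi_0(qxq)$ then defines a normal state on $\fA^{**}$ with $\widehat\phi(q)=1$, so that $\fs_\phi\leq q$ for $\phi=\widehat\phi|_{\fA}$.

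With this tool in hand, both spectral identifications follow by contradiction from the peaking hypothesis. From $sp=p$ we deduce $p\leq q_+$. If $q_+-p\neq 0$, apply the lifting step to obtain a state $\phi$ on $\fA$ with $\fs_\phi\leq q_+-p$; then $\fs_\phi$ is orthogonal to $p$, so the peaking property forces $|\phi(s)|<1$. On the other hand, $s$ commutes with $q_+$ and $sq_+=q_+$, so $\fs_\phi s\fs_\phi=\fs_\phi$; since $\fs_\phi$ lies in the multiplicative domain of $\widehat\phi$, one obtains $\widehat\phi(s)=\widehat\phi(\fs_\phi)=1$, a contradiction. An analogous argument handles $q_-$: any state with $\fs_\phi\leq q_-$ satisfies $\fs_\phi\perp p$ (as $q_-\perp q_+\geq p$) and $\widehat\phi(s)=-1$, again contradicting $|\phi(s)|<1$. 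Hence $q_+=p$ and $q_-=0$, and the dominated convergence step from the first paragraph concludes the proof.
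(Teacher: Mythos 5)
Your proof is correct, and it takes a genuinely different route from the paper's. The paper uses the relations $p=sp=ps$ to decompose $s=p+t$ with $t=(I-p)s(I-p)$, shows that $t$ is completely non-unitary by deriving a contradiction with the peaking hypothesis (compressing to a hypothetical unitary part produces a state $\psi$ with $\widehat\psi(p)=0$, hence $\fs_\psi p=0$, yet $|\psi(s)|=1$), and then invokes Sz.-Nagy--Foias theory on weak convergence of powers of completely non-unitary (here self-adjoint) contractions. You instead stay inside $\fA^{**}$ with the Borel functional calculus: you identify $E(\{1\})=p$ and $E(\{-1\})=0$, using a lifting step (any non-zero projection $q\in\fA^{**}$ dominates the support projection of some state on $\fA$, via a normal state on $q\fA^{**}q$) to manufacture, from $q_+-p$ or $q_-$, a state whose support is orthogonal to $p$ but with $|\phi(s)|=1$, contradicting the peaking hypothesis exactly as in the paper; you then finish by dominated convergence applied to the scalar measures $\widehat\phi\circ E$, noting that powers $t^n$ converge pointwise to $\chi_{\{1\}}$ off $\{-1\}$ and that the mass at $-1$ vanishes. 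So the contradiction mechanism is parallel (a state supported orthogonally to $p$ on which $s$ attains modulus one), but the convergence step is different: dilation theory in the paper versus spectral measures plus dominated convergence in your argument. Your version is arguably more self-contained, since it avoids the Nagy--Foias machinery altogether (the paper itself remarks that for self-adjoint $t$ a sharper spectral result suffices), and it makes explicit why possible spectral mass at $-1$ -- the only obstruction to convergence of $(s^n)_n$ -- is ruled out by the peaking hypothesis. All the individual steps (the identification of states on $\fA$ with normal states on $\fA^{**}$, the inference $\widehat\phi(q)=1\Rightarrow\fs_\phi\leq q$, the inclusion $p\leq q_+$ from $(1-s)p=0$, and the multiplicative-domain computation $\widehat\phi(s)=\widehat\phi(\fs_\phi s\fs_\phi)=\pm 1$) check out.
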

\begin{proof}
We may assume that $\fA^{**}\subset B(\H)$ for some Hilbert space $\H$, and that the weak-$*$ topology of $\fA^{**}$ coincides with that of $B(\H)$. Since $\|s\|=1$, it suffices to show that the sequence $(s^n)_n$ converges to $p$ in the weak operator topology of $B(\H)$. To see this, we use the fact that $p=s p=p s$ to infer
\[
s=p+(I-p)s (I-p).
\]
Put $t=(I-p)s(I-p)$ and observe that $\|t\|\leq 1$. We claim that $t$ is completely non-unitary, in the sense that it has no reducing subspace on which it restricts to be unitary. Assume on the contrary that there is a closed subspace $\M\subset (I-p)\H$ which is reducing for $t$ and such that the operator $u=t|_\M$ is unitary. Since $s$ is assumed to be self-adjoint, so is $u$ and we see that $u^2=I$. Thus, there is a vector state $\phi$ on $B(\M)$ such that $|\phi(u)|=1$. Let $\Psi$ be the normal state on $\fA^{**}$ defined as
\[
\Psi(x)=\phi(P_{\M}x|_{\M}),\quad x\in \fA^{**}.
\]
If we put $\psi=\Psi|_{\fA}$ then it is easily verified that $\psi$ is a state on $\fA$ such that $\widehat\psi=\Psi$. 
We see that 
\[
|\psi(s)|=|\phi(u)|=1.
\]
On the other hand, since $\M$ is orthogonal to the range of $p$, we see that $ \widehat\psi(p)=0$ whence $p\leq I-\fs_\psi$ and $\fs_\psi p=0$.  This contradicts the fact that $p$ peaks on $s$. Hence, we conclude that indeed $t$ is completely non-unitary. In particular, it is an absolutely continuous contraction \cite[Theorem II.6.4]{nagy2010}, so that
\[
\lim_{n\to \infty}\langle t^n \xi,\eta\rangle=0, \quad \xi,\eta\in \H.
\]
(Alternatively, since $t$ is self-adjoint one can apply the sharper  \cite[Theorem II.6.7]{nagy2010} in this context). Since $s^n=p+t^n$ for each $n\in \bN$, we conclude that the sequence $(s^n)_n$ converges to $p$ in the weak operator topology of $B(\H)$. 
\end{proof}

We can now establish a very natural link between $\fS$-peaking states and $\fS$-peaking projections.

\begin{theorem}\label{T:peakHay}
Let $\fA$ be a unital $\rC^*$-algebra and let $\fS\subset \fA$ be an operator system. Let $s\in \fS$ be a self-adjoint element with $\|s\|=1$. Then, a pure state $\omega$ on $\fA$ is $\fS$-peaking on $s$ if and only if its support projection $\fs_\omega\in \fA^{**}$ is $\fS$-peaking on $s$.
\end{theorem}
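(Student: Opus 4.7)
The plan is to prove the two implications separately, noting that one direction is almost immediate once I identify the common consequence $s\fs_\omega = \fs_\omega$ of the peaking condition. First I would establish this identity: since $s^*=s$ and $\|s\|=1$, we have $I-s \geq 0$ in $\fA$, and $\omega(s) = 1$ gives $\widehat\omega((I-s)^{1/2}(I-s)^{1/2}) = 0$, so $(I-s)^{1/2} \in \fN_\omega = \fA^{**}(I-\fs_\omega)$. This forces $(I-s)\fs_\omega = 0$, i.e.\ $s\fs_\omega = \fs_\omega$; the same argument applied to any state with $\omega(s)$ replaced by the appropriate value will be needed below.

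For $(\Rightarrow)$, suppose $\omega$ peaks on $s$ and let $\phi$ be any state with $\fs_\phi \fs_\omega = 0$. Since $\fs_\phi$ lies in the multiplicative domain of $\widehat\phi$, one has $\widehat\phi(\fs_\omega) = \widehat\phi(\fs_\phi \fs_\omega \fs_\phi) = 0$, while $\widehat\omega(\fs_\omega) = 1$, so $\phi \neq \omega$. The peaking hypothesis on $\omega$ then yields $|\phi(s)| < 1$, as required.

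The reverse direction is the real content. Suppose $\fs_\omega$ peaks on $s$; then $\omega(s) = \widehat\omega(\fs_\omega s \fs_\omega) = \widehat\omega(\fs_\omega) = 1$. Let $\phi \neq \omega$ be a state on $\fA$; I would argue by contradiction, assuming $|\phi(s)| = 1$. Self-adjointness of $s$ forces $\phi(s) = \pm 1$, and the positivity trick used above, applied to $\phi$, yields $s\fs_\phi = \pm \fs_\phi$. At this point I would invoke Lemma \ref{L:weak*limit}, which gives $s^n \to \fs_\omega$ in the weak-$*$ topology of $\fA^{**}$. Multiplying on the right by $\fs_\phi$ and using separate weak-$*$ continuity of multiplication, $s^n \fs_\phi \to \fs_\omega \fs_\phi$. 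In the $+1$ case, $s^n \fs_\phi = \fs_\phi$ for every $n$, so $\fs_\omega \fs_\phi = \fs_\phi$, i.e.\ $\fs_\phi \leq \fs_\omega$; then $\widehat\phi(\fs_\omega) \geq \widehat\phi(\fs_\phi) = 1$, and Lemma \ref{L:distinctpure} forces $\phi = \omega$, contradicting our assumption. In the $-1$ case, $s^n \fs_\phi = (-1)^n \fs_\phi$ fails to weak-$*$ converge unless $\fs_\phi = 0$, which is impossible.

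The main obstacle, I expect, is the conceptual one of seeing how to promote the hypothesis --- which only controls states orthogonal to $\fs_\omega$ --- into global control over all states distinct from $\omega$. Lemma \ref{L:weak*limit} is precisely the bridge: it converts the algebraic relation $s\fs_\phi = \pm\fs_\phi$ (extracted from Choi's multiplicative-domain result) into the order-theoretic relation $\fs_\phi \leq \fs_\omega$, which then combines with Lemma \ref{L:distinctpure} to collapse the two states.
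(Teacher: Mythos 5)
Your proof is correct and follows essentially the same route as the paper: the forward direction reduces to the identity $s\fs_\omega=\fs_\omega$, and the converse hinges on Lemma \ref{L:weak*limit} (weak-$*$ convergence of $s^n$ to $\fs_\omega$) combined with Lemma \ref{L:distinctpure}. Your minor variations --- using positivity of $(I-s)^{1/2}$ instead of Choi's multiplicative-domain argument, and handling $\phi(s)=\pm 1$ through $\fs_\phi$ and separate weak-$*$ continuity rather than evaluating $\widehat\phi$ on the powers $s^n$ --- are sound and do not change the substance of the argument.
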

\begin{proof}
Fix a pure state $\omega$ on $\fA$. 

Assume first that $\omega$ is $\fS$-peaking on $s$. To see that $\fs_\omega$ is $\fS$-peaking on $s$ as well, it suffices to verify that $s \fs_\omega=\fs_\omega$. Now, by assumption we see that $\omega(s)=1$ and $\|s\|=1$, so that $s$ belongs to the multiplicative domain of $\widehat\omega$ and thus
\[
\widehat \omega(xs)=\widehat \omega(x), \quad x\in \fA^{**}.
\]
This shows that $I-s\in \fN_\omega$, whence $(I-s)\fs_\omega=0$ or $s\fs_\omega=\fs_\omega$.  We conclude that $\fs_\omega$ is $\fS$-peaking.

Conversely, assume that $\fs_\omega$ is $\fS$-peaking on $s$. We obtain
\[
\omega(s)=\widehat \omega(s\fs_\omega)=\widehat\omega(\fs_\omega)=1.
\]
Next, by Lemma \ref{L:weak*limit} we see that the sequence $(s ^n)_n$ converges to $\fs_\omega$ in the weak-$*$ topology of $\fA^{**}$. Let $\phi$ be a state on $\fA$ such that $|\phi(s)|=1$.  Since $\|s\|= 1$, this forces  $s$ to lie in the multiplicative domain of $\phi$. In particular, we have $|\phi(s^n)|=|\phi(s)|^n=1$ and thus
\[
|\widehat \phi (\fs_\omega)|=\lim_{n\to \infty} |\phi(s^n)|=1
\]
which implies $\phi=\omega$ by Lemma \ref{L:distinctpure}. We conclude that $\omega$ is $\fS$-peaking on $s$.
\end{proof}

It is worthy of note that in the case where $\fS$ is a unital \emph{subalgebra} of $\fA$, some faithful analogues of classical characterizations were obtained for Hay's peaking projections (see \cite{blecher2013} and references therein).

We close this section by discussing another notion of non-commutative peaking behaviour, guided by \cite{arveson2011}. Let $\fA$ be a unital $\rC^*$-algebra and let $\fS\subset \fA$ be an operator system. Let $\pi$ be a unital $*$-representation of $\fA$. Given another unital $*$-representation $\rho$ of $\fA$, we write $\pi\prec \rho$ to signify that 
\[
\|\pi(a)\|\leq \|\rho(a)\|, \quad a\in \fA.
\]
Equivalently, we see that $\pi\prec \rho$ if and only if there is a unital $*$-representation $\theta:\rho(\fA)\to \pi(\fA)$ such that $\theta\circ \rho=\pi$. 
We say that the $*$-representation $\pi$ of $\fA$ is  $\fS$-\emph{peaking} if there is an element $s\in \fS$ with the property that 
\[
\|\pi(s)\|>\|\rho(s)\|
\]
for every unital $*$-representation $\rho$ of $\fA$ such that $\pi\nprec \rho$. We then say that $\pi$ \emph{peaks on} $s$. The reader will note that this definition apparently differs from that of \cite{arveson2011}. However, in the setting where it is used therein, the definitions coincide as the irreducible $*$-representations of $\rC^*$-algebras of compact operators are particularly transparent.

The GNS construction allows one to construct $*$-representations starting from states. We show how the condition of being $\fS$-peaking is preserved by this procedure.

\begin{proposition}\label{P:peakrep}
Let $\fA$ be a unital $\rC^*$-algebra and let $\fS\subset \fA$ be an operator system. Let $\psi$ be a state on $\fA$ which is $\fS$-peaking. Then, its GNS representation $\sigma_\psi$ is $\fS$-peaking as well.
\end{proposition}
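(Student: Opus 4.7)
My plan is to show that $\sigma_\psi$ peaks on the very same self-adjoint element $s \in \fS$ on which $\psi$ peaks. Thus fix $s$ with $\|s\|=1$ and $\psi(s)=1>|\phi(s)|$ for every state $\phi\neq\psi$ on $\fA$.

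First I verify that $\|\sigma_\psi(s)\|=1$. Since $\psi(s)=\langle\sigma_\psi(s)\xi_\psi,\xi_\psi\rangle=1$ with $\xi_\psi$ a unit vector, and $\|\sigma_\psi(s)\|\leq\|s\|=1$, the Cauchy--Schwarz inequality forces equality (in fact $\sigma_\psi(s)\xi_\psi=\xi_\psi$). Now let $\rho:\fA\to B(\H_\rho)$ be an arbitrary unital $*$-representation with $\sigma_\psi\nprec\rho$; I need to show $\|\rho(s)\|<1$.

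Suppose for contradiction that $\|\rho(s)\|=1$. Because $\rho(s)$ is self-adjoint, $\pm 1$ lies in its spectrum, so choosing approximate eigenvectors and a weak-$*$ accumulation point of the associated vector functionals on $B(\H_\rho)$ produces a state $\sigma$ on $B(\H_\rho)$ with $|\sigma(\rho(s))|=1$. Then $\tau:=\sigma\circ\rho$ is a state on $\fA$ satisfying $|\tau(s)|=1$. The peaking property of $\psi$ forces $\tau=\psi$, and in particular $\psi$ annihilates $\ker\rho$.

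To derive the desired contradiction $\sigma_\psi\prec\rho$, I use the fact that $\ker\rho$ is a closed two-sided $*$-ideal. For any $a\in\ker\rho$ and $b\in\fA$ we have $ab\in\ker\rho$ and hence $(ab)^*(ab)\in\ker\rho$, so $\psi((ab)^*(ab))=0$. Applying the Schwarz-inequality characterization of the GNS kernel recalled in Subsection 2.3, this means $\sigma_\psi(a)\sigma_\psi(b)\xi_\psi=\sigma_\psi(ab)\xi_\psi=0$. Cyclicity of $\xi_\psi$ for $\sigma_\psi$ then forces $\sigma_\psi(a)=0$, i.e.\ $\ker\rho\subseteq\ker\sigma_\psi$, which means $\sigma_\psi\prec\rho$, contradicting our assumption. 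Hence $\|\rho(s)\|<1=\|\sigma_\psi(s)\|$, proving $\sigma_\psi$ is $\fS$-peaking on $s$.

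The principal point requiring care is the production of the state $\sigma$ on $B(\H_\rho)$ witnessing $|\sigma(\rho(s))|=1$, since the extremal spectral value of $\rho(s)$ need not be an eigenvalue; however this is settled by the standard approximate-eigenvector plus weak-$*$ compactness argument. Everything else proceeds from the peaking hypothesis and elementary GNS bookkeeping.
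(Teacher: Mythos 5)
Your proof is correct and follows essentially the same route as the paper: pick a state attaining the norm of $\rho(s)$, pull it back to $\fA$, let the peaking property force it to equal $\psi$, and conclude $\sigma_\psi\prec\rho$ for a contradiction. The only cosmetic difference is how you obtain $\sigma_\psi\prec\rho$ (via $\ker\rho\subseteq\ker\sigma_\psi$ and cyclicity, rather than the paper's observation that the GNS representation of $\chi\circ\rho$ is unitarily equivalent to $\sigma_\psi$), which is an equally valid bookkeeping step.
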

\begin{proof}
Since $\psi$ is $\fS$-peaking, there is a self-adjoint element $s\in \fS$ such that $\|s\|=1$ and with the property that 
\[
1=\psi(s)>|\phi(s)|
\]
for every state $\phi$ on $\fA$ such that $\phi\neq \psi$. Now, let $\rho$ be a unital $*$-representation such that $\sigma_\psi\not\prec \rho$. Seeing as $s$ is self-adjoint, we may choose a state $\chi$ on $\rho(\fA)$ with the property that 
\[
|\chi(\rho(s))|=\|\rho(s)\|.
\]
Note now that if $\chi\circ \rho=\psi$ then $\sigma_\chi\circ \rho$ is unitarily equivalent to $\sigma_\psi$, and thus $\sigma_\psi\prec \rho$ which is absurd. Hence, $\chi\circ \rho\neq \psi$. We infer that
\[
\|\rho(s)\|=|\chi(\rho(s))|<\psi(s)\leq \|\sigma_\psi(s)\|
\]
and therefore $\sigma_\psi$ is $\fS$-peaking.
\end{proof}

One consequence of the previous proposition is that the condition that every pure state on $\fA$ be $\fS$-peaking implies that every irreducible $*$-representation of $\fA$ is $\fS$-peaking. The converse does not hold, as we now illustrate.

\begin{example}\label{E:peakreppeakstate}
We return to the setting of Example \ref{E:peakingmatrixunits}. Let $\bM_2$ denote the $2\times 2$ complex matrices and consider the standard matrix unit $E_{12}\in \bM_2$. Let $\fS\subset \bM_2$ be the operator system generated by $I$ and $E_{12}$. We have $\bM_2=\rC^*(\fS)$. Up to unitary equivalence, the only irreducible $*$-representation of $\bM_2$ is the identity representation. Tautologically, we see that every irreducible $*$-representation of $\bM_2$ is $\fS$-peaking. However, we saw in Example \ref{E:peakingmatrixunits} that there are plenty of pure states on $\bM_2$ which are not $\fS$-peaking.

\end{example}

\section{Characteristic sequences}\label{S:aspeaking}

In the previous section, we introduced and studied the notion of $\fS$-peaking states, a certain non-commutative analogue of classical peak points for uniform algebras. For the remainder of the paper, we aim to exploit these ideas to obtain a local version of the hyperrigidity conjecture. To do so, we will need to consider states that have a global type of peaking behaviour, such as in the following example.

\begin{example}\label{E:gapeakingBH}
Let $\H$ be a Hilbert space and let $\fA\subset B(\H)$ be a unital $\rC^*$-algebra which contains the ideal $\fK$ of compact operators on $\H$. Put $\fS=\fK+\bC I$. Let $\xi\in \H$ be a unit vector and let $\omega$ be the pure vector state on $\fA$ defined as
\[
\omega(a)=\langle a\xi,\xi \rangle, \quad a\in \fA.
\]
We saw in Example \ref{E:peakingBH} that $\omega$ is $\fS$-peaking. Furthermore, if we let $s\in \fS$ be the rank-one projection onto $\bC \xi$, then a simple calculation reveals that 
\[
s^* a s=\langle a\xi,\xi\rangle s=\omega(a) s
\]
whence 
\[
\|s^* a s\|=|\omega(a)|
\]
for each $a\in \fA$. This last equality says that the $\fS$-peaking state $\omega$ also possesses a certain global peaking property on $\fA$.

\end{example}

It will be sufficient for our purposes to have such a global peaking condition be satisfied in an asymptotic sense. Before proceeding with the definition, we take a closer look at the classical setting so as to provide motivation for what is to come.

\begin{example}\label{E:globpeakunif}
Let $X$ be a compact Hausdorff space and $\A\subset C(X)$ be a unital subalgebra. Let $x_0\in X$ be a point such that there is a function $\phi_0\in\A$ with the property that 
\[
|\phi_0(y)|<\phi_0(x_0)=1
\]
for each $y\in X, y\neq x_0$. 
Then, the peak point $x_0$ has a certain asymptotic global peaking property. More precisely, we have that
\[
\lim_{n\to \infty} \|\phi_0^{n}f\|= |f(x_0)|, \quad f\in C(X).
\]
To see this, fix a non-zero function $f\in C(X)$ and a number $\eps>0$. By continuity, there is an open subset $U\subset X$  containing $x_0$ with the property that 
\[
|f(y)|< |f(x_0)|+\eps
\]
if $y\in U$. Note also that $X\setminus U$ is compact, and thus there is a positive integer $N$ large enough so that
\[
\sup\{ |\phi_0(y)|^{n}: y\in X\setminus U\} \leq \eps \|f\|^{-1}, \quad n\geq N.
\]
Consequently,
\[
\limsup_{n\to \infty}\|\phi_0^n f \|\leq |f(x_0)|+\eps.
\]
Since $\eps>0$ is arbitrary, we infer that
\[
\limsup_{n\to \infty}\|\phi_0^n f \|\leq |f(x_0)|.
\]
On the other hand, for each positive integer $n$ we have
\[
\|\phi_0^n f \|\geq |f(x_0)|
\]
and thus 
\[
\lim_{n\to \infty} \|\phi_0^{n}f\|= |f(x_0)|.
\]

\end{example}

Roughly speaking, the previous example shows that a peak point $x_0\in X$ for a uniform algebra $\A$ gives rise to a sequence lying in $\A$ which  behaves asymptotically like the characteristic function of the singleton $\{x_0\}$. With the help of this device, the uniform algebra can be ``localized" at the peak point. This is the phenomenon we want to reproduce in the general non-commutative setting, and accordingly we make the following definition, motivated by Examples \ref{E:gapeakingBH} and \ref{E:globpeakunif}.

Let $\fA$ be a unital $\rC^*$-algebra and let $\psi$ be a state on $\fA$. A sequence $(\Delta_n)_n$ in $\fA$ is said to be a \emph{characteristic sequence} for $\psi$  if the following conditions are satisfied:
\begin{enumerate}
\item[\rm{(a)}] $\|\Delta_n\|=1$ for every $n\in \bN$,
\item[\rm{(b)}] $\lim_{n\to \infty }\psi(\Delta_n)=1,$ and

\item[\rm{(c)}] $\limsup_{n\to\infty}\|\Delta_n^* a \Delta_n\|\leq |\psi(a)|$ for every $a\in \fA$.
\end{enumerate}

To uncover some of the properties of characteristic sequences, we require the following fact.

\begin{lemma}\label{L:charseqlim}
Let $\fA$ be a unital $\rC^*$-algebra and let $\phi, \psi$ be states on $\fA$. Assume that $(\Delta_n)_n$ is a characteristic sequence for $\psi$ and that 
\[
\lim_{n\to\infty}|\phi(\Delta_n)|=1.
\]
Then, we have that
\[
\phi(a)=\lim_{n\to \infty}\phi(\Delta_n^* a\Delta_n), \quad a\in \fA
\]
and $\phi=\psi$.
\end{lemma}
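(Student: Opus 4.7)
The plan is to base everything on the Cauchy--Schwarz inequality for the state $\phi$. Since $|\phi(\Delta_n)| \to 1$ while $\|\Delta_n\| \leq 1$, I would first pass to a phase-adjusted sequence $U_n = e^{-i\theta_n}\Delta_n$, where $\phi(\Delta_n) = e^{i\theta_n}|\phi(\Delta_n)|$, so that $\phi(U_n) \to 1$ (note that replacing $\Delta_n$ by $U_n$ is harmless, since $U_n^* a U_n = \Delta_n^* a \Delta_n$). The Schwarz inequality for $\phi$ then forces $\phi(U_n^*U_n) \to 1$ and $\phi(U_nU_n^*) \to 1$, and expanding yields
\[
\phi\bigl((I-U_n)^*(I-U_n)\bigr) \to 0 \qand \phi\bigl((I-U_n)(I-U_n)^*\bigr) \to 0.
\]

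For the first claim, I would use the algebraic identity
\[
a - \Delta_n^* a \Delta_n = a(I - U_n) + (I-U_n^*)\,a\, U_n
\]
and bound $|\phi(a(I-U_n))|$ and $|\phi((I-U_n^*)a U_n)|$ by the Cauchy--Schwarz inequality for $\phi$. In each bound, one factor is of the form shown above to tend to zero, while the other is dominated uniformly by $\|a\|^2$ via another application of the Schwarz inequality and $\|U_n\|\leq 1$. This gives $\phi(a) - \phi(\Delta_n^* a \Delta_n) \to 0$ for every $a\in \fA$.

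For the second claim, I would combine what was just established with condition (c) of the definition of a characteristic sequence: since $|\phi(\Delta_n^* a \Delta_n)| \leq \|\Delta_n^* a \Delta_n\|$, taking $n\to\infty$ yields $|\phi(a)| \leq |\psi(a)|$ for every $a \in \fA$. Restricting to positive $a$, both sides are nonnegative real numbers, so $\phi(a) \leq \psi(a)$. Thus $\psi - \phi$ is a positive linear functional vanishing at the identity, hence identically zero, and $\phi = \psi$.

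The main obstacle I anticipate is the phase correction at the outset: without absorbing $e^{i\theta_n}$, the two ``approximation by the identity'' estimates that drive the Cauchy--Schwarz bounds simply are not available. Once the phase is accounted for, the rest consists of careful but standard manipulations with the Schwarz inequality and the final positivity argument.
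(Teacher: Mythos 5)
Your proposal is correct, and it proves the lemma by a route whose mechanics differ from the paper's even though both are driven by the Schwarz inequality. The paper obtains the same intermediate limit $\phi(a)=\lim_n\phi(\Delta_n^*a\Delta_n)$ without any phase correction: it applies the Schwarz inequality to the $2\times 2$ matrices $\left[\begin{smallmatrix}\Delta_n & a^*\\ 0 & 0\end{smallmatrix}\right]$ to get the quantitative multiplicative-domain estimate $|\phi(a\Delta_n)-\phi(a)\phi(\Delta_n)|^2\leq \|a\|^2\bigl(\phi(\Delta_n^*\Delta_n)-|\phi(\Delta_n)|^2\bigr)$ (and its adjoint version), and then compares $\phi(\Delta_n^*a\Delta_n)$ with $\phi(\Delta_n^*)\phi(a)\phi(\Delta_n)=|\phi(\Delta_n)|^2\phi(a)$, in which the unknown phases cancel automatically. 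Your argument instead normalizes the phases so that $U_n\to I$ in the seminorm $x\mapsto\phi(x^*x)^{1/2}$ (equivalently, $U_n\xi_\phi\to\xi_\phi$ in the GNS space), and then splits $a-U_n^*aU_n=a(I-U_n)+(I-U_n^*)aU_n$ and applies plain Cauchy--Schwarz; your remark that the phase twist is harmless because $U_n^*aU_n=\Delta_n^*a\Delta_n$ is exactly what makes this legitimate, and indeed without it the estimate $\phi\bigl((I-\Delta_n)^*(I-\Delta_n)\bigr)\to 0$ would fail in general. (Minor remark: both of your Cauchy--Schwarz bounds can be arranged to use only $\phi\bigl((I-U_n)^*(I-U_n)\bigr)\to 0$, so the second limit you record is a harmless redundancy.) The endgames also differ slightly: from $|\phi(a)|\leq|\psi(a)|$ for all $a$, the paper concludes $\ker\psi\subset\ker\phi$, so $\phi$ is a scalar multiple of $\psi$ and hence equal to it; you restrict to positive elements and observe that $\psi-\phi$ is a positive functional vanishing at the identity, which is equally valid and arguably a bit cleaner. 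Either way the conclusion $\phi=\psi$ follows, so nothing is missing from your argument.
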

\begin{proof}
By the Schwarz inequality we have
\[
|\phi(\Delta_n)|^2=\phi(\Delta_n)^*\phi(\Delta_n)\leq \phi(\Delta_n^*\Delta_n)\leq 1
\]
and thus
\[
\lim_{n\to \infty}(\phi(\Delta_n^*\Delta_n) -\phi(\Delta_n)^*\phi(\Delta_n)) =0.
\]
Let $a\in \fA$. For each $n\in \bN$ consider the element
\[
A_n=
\begin{bmatrix}
\Delta_n & a^*\\
0 & 0
\end{bmatrix}\in \bM_2(\fA).
\]
By the Schwarz inequality, we find that
\[
\phi^{(2)}(A_n)^* \phi^{(2)}(A_n)\leq \phi^{(2)}(A_n^*A_n).
\]
Equivalently
\[
\begin{bmatrix}
\phi(\Delta_n)^*\phi(\Delta_n) &\phi(\Delta_n)^*\phi(a)^*\\
\phi(a)\phi(\Delta_n) & \phi(a)\phi(a)^*
\end{bmatrix}
\leq \begin{bmatrix}
\phi(\Delta_n^*\Delta_n) & \phi(\Delta_n^* a^*)\\
\phi(a\Delta_n) & \phi(aa^*)
\end{bmatrix},
\]
or
\[
\begin{bmatrix}
\phi(\Delta_n^*\Delta_n) -\phi(\Delta_n)^*\phi(\Delta_n) & \phi(\Delta_n^* a^*)-\phi(\Delta_n)^*\phi(a)^*\\
\phi(a\Delta_n)-\phi(a)\phi(\Delta_n)  & \phi(aa^*)- \phi(a)\phi(a)^*
\end{bmatrix}\geq 0.
\]
In particular, we find that
\begin{align*}
|\phi(a\Delta_n)-\phi(a)\phi(\Delta_n) |^2&\leq ( \phi(aa^*)- \phi(a)\phi(a)^*)(\phi(\Delta_n^*\Delta_n) -\phi(\Delta_n)^*\phi(\Delta_n))\\
&\leq \|a\|^2(\phi(\Delta_n^*\Delta_n) -\phi(\Delta_n)^*\phi(\Delta_n)).
\end{align*}
Repeating the argument with $a^*$ in place of $a$ yields
\begin{align*}
|\phi(\Delta_n^*a )-\phi(a)\phi(\Delta^*_n) |^2&=|\phi(a^*\Delta_n)-\phi(a^*)\phi(\Delta_n) |^2\\
&\leq \|a\|^2(\phi(\Delta_n^*\Delta_n) -\phi(\Delta_n)^*\phi(\Delta_n)).
\end{align*}
Therefore
\begin{align*}
&|\phi(\Delta_n^*a\Delta_n)-\phi(\Delta_n^*)\phi(a)\phi(\Delta_n) |\\
&\leq |\phi(\Delta_n^*a\Delta_n)-\phi(\Delta_n^*a)\phi(\Delta_n)|+|\phi(\Delta_n^*a)\phi(\Delta_n)-\phi(\Delta_n^*)\phi(a)\phi(\Delta_n)|\\
&\leq (\|\Delta_n^* a\| +|\phi(\Delta_n)|\|a\|)(\phi(\Delta_n^*\Delta_n) -\phi(\Delta_n)^*\phi(\Delta_n))^{1/2}\\
&\leq 2\|a\|(\phi(\Delta_n^*\Delta_n) -\phi(\Delta_n)^*\phi(\Delta_n))^{1/2}
\end{align*}
which then forces
\[
\lim_{n\to \infty}(\phi(\Delta_n^* a\Delta_n)-\phi(\Delta_n^*)\phi(a)\phi(\Delta_n) )=0.
\]
Since 
\[
\lim_{n\to \infty}|\phi(\Delta_n)|=1
\]
we find
\[
\phi(a)=\lim_{n\to \infty}\phi(\Delta_n^* a\Delta_n)
\]
and thus
\[
|\phi(a)|\leq |\psi(a)|
\]
since $(\Delta_n)_n$ is a characteristic sequence for $\psi$. Since this holds for every $a\in \fA$, we infer that $\ker \psi\subset \ker \phi$, so that $\phi$ is a scalar multiple of $\psi$. Since they are both states, this forces $\phi=\psi$. 
\end{proof}

We can now establish some useful properties of characteristic sequences.

\begin{theorem}\label{T:gapeakSchwarz}
Let $\fA$ be a unital $\rC^*$-algebra and let $\psi$ a state on $\fA$. Let $(\Delta_n)_n$ be a characteristic sequence for $\psi$. 
The following statements hold.
\begin{enumerate}

\item[\rm{(1)}]  We have that 
\[
\lim_{n\to\infty}\|\Delta_n a \Delta_n^*\|=|\psi(a)|
\]
for every $a\in \fA$.

\item[\rm{(2)}]  We have that 
\[
\limsup_{n\to\infty}|\phi(\Delta_n)|<1
\]
for every state $\phi$ on $\fA$ such that $\phi\neq \psi$.

\item[\rm{(3)}] The state $\psi$ is pure. 

\item[\rm{(4)}] Let $\fS\subset \fA$ be an operator system and assume that $\Delta_n\in \fS$ for each $n\in \bN$. Then, $\psi$ has the unique extension property with respect to $\fS$.
\end{enumerate}
\end{theorem}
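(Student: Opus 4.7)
The strategy rests on Lemma~\ref{L:charseqlim}: in each of parts (2)--(4) I will exhibit a state $\phi$ satisfying the lemma's hypothesis $\lim_n|\phi(\Delta_n)|=1$, whereupon Lemma~\ref{L:charseqlim} collapses $\phi$ onto $\psi$. Part (1) uses the same identification applied to $\phi=\psi$ itself.

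For (2), I would argue by contradiction. If a state $\phi\neq\psi$ satisfied $\limsup_n|\phi(\Delta_n)|=1$, I would extract a subsequence $(\Delta_{n_k})_k$ along which $|\phi(\Delta_{n_k})|\to 1$. Conditions (a)--(c) in the definition of a characteristic sequence involve only $\lim$ and $\limsup$, so $(\Delta_{n_k})_k$ is again a characteristic sequence for $\psi$, and Lemma~\ref{L:charseqlim} forces $\phi=\psi$, a contradiction. For (3), I would write $\psi=\tfrac{1}{2}(\phi_1+\phi_2)$ with states $\phi_i$ on $\fA$. Taking real parts of $\psi(\Delta_n)\to 1$ gives $\re\phi_1(\Delta_n)+\re\phi_2(\Delta_n)\to 2$, and since $\re\phi_i(\Delta_n)\le|\phi_i(\Delta_n)|\le 1$ a squeeze yields $\re\phi_i(\Delta_n)\to 1$, then $\phi_i(\Delta_n)\to 1$. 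Lemma~\ref{L:charseqlim} gives $\phi_i=\psi$ for each $i$, so $\psi$ is pure. For (4), any two unital completely positive extensions $\psi_1,\psi_2$ of $\psi|_{\fS}$ to $\fA$ agree with $\psi$ on each $\Delta_n\in\fS$, hence $\psi_i(\Delta_n)=\psi(\Delta_n)\to 1$, and Lemma~\ref{L:charseqlim} forces $\psi_i=\psi$.

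For (1), I would first apply Lemma~\ref{L:charseqlim} with $\phi=\psi$---permissible because $\psi(\Delta_n)\to 1$---to obtain $\psi(a)=\lim_n\psi(\Delta_n^* a\Delta_n)$. Combined with condition (c) of the characteristic-sequence definition, this yields the chain
\[
|\psi(a)|=\lim_n|\psi(\Delta_n^* a\Delta_n)|\le\liminf_n\|\Delta_n^* a\Delta_n\|\le\limsup_n\|\Delta_n^* a\Delta_n\|\le|\psi(a)|,
\]
so $\|\Delta_n^* a\Delta_n\|\to|\psi(a)|$. To pass from this to the stated form with $\Delta_n$ on the left and $\Delta_n^*$ on the right, I would observe that $\psi(\Delta_n\Delta_n^*)\to 1$: indeed, the Schwarz inequality for $\psi$ applied to the element $\Delta_n^*$ gives $|\psi(\Delta_n^*)|^2\le\psi(\Delta_n\Delta_n^*)\le 1$, and $\psi(\Delta_n^*)=\ol{\psi(\Delta_n)}\to 1$. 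Repeating the Schwarz computation underlying Lemma~\ref{L:charseqlim} after interchanging the roles of $\Delta_n$ and $\Delta_n^*$---that is, applied to the matrix with first row $\Delta_n^*,a^*$ and zero second row---then delivers $\psi(a)=\lim_n\psi(\Delta_n a\Delta_n^*)$, producing the lower bound $|\psi(a)|\le\liminf_n\|\Delta_n a\Delta_n^*\|$.

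The matching upper bound is the main obstacle, since the definition of a characteristic sequence only directly controls $\|\Delta_n^* a\Delta_n\|$ rather than $\|\Delta_n a\Delta_n^*\|$. My plan is to leverage the $\rC^*$-identity $\|\Delta_n a\Delta_n^*\|^2=\|\Delta_n a^*\Delta_n^*\Delta_n a\Delta_n^*\|$, together with $\Delta_n^*\Delta_n\le I$ used to absorb the central factor and reduce to a norm that condition (c) can bound; this is the step where I expect the genuine work of the proof to lie, and it is also the step where the interplay between the asymptotic rank-one behaviour encoded by $(\Delta_n)_n$ and the multiplicative structure of $\fA$ becomes delicate.
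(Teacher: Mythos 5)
Your arguments for (2), (3) and (4), and your derivation of $\lim_n\|\Delta_n^*a\Delta_n\|=|\psi(a)|$ in part (1), are essentially identical to the paper's: a subsequence of a characteristic sequence is again a characteristic sequence, and Lemma~\ref{L:charseqlim} does all the work. The only point of divergence is the step you flag as the ``main obstacle,'' namely passing to $\|\Delta_na\Delta_n^*\|$ with the stars transposed. Be aware that the paper does not prove that form either: its proof of (1) establishes $|\psi(a)|=\lim_n|\psi(\Delta_n^*a\Delta_n)|\le\liminf_n\|\Delta_n^*a\Delta_n\|$ and combines this with condition (c) of the definition, so the displayed conclusion of item (1) is evidently a typo for $\lim_n\|\Delta_n^*a\Delta_n\|=|\psi(a)|$ --- the form consistent with the definition of a characteristic sequence and with every later use in the paper (Lemma~\ref{L:gapineq} and Theorem~\ref{T:gapeaklocalhr} only ever involve expressions of the shape $\pi(\Delta_n)^*(\,\cdot\,)\pi(\Delta_n)$, and item (1) is not invoked again). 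So for the intended statement your proof is complete and matches the paper's.

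Two smaller remarks on the extra material you added. Your lower bound $|\psi(a)|\le\liminf_n\|\Delta_na\Delta_n^*\|$ via the transposed Schwarz computation is correct, since that part of Lemma~\ref{L:charseqlim} only uses $\|\Delta_n\|\le1$ and $|\psi(\Delta_n^*)|\to1$. However, your sketched route to the matching upper bound cannot close as written: using $\Delta_n^*\Delta_n\le I$ inside the $\rC^*$-identity only yields $\|\Delta_na\Delta_n^*\|^2\le\|\Delta_naa^*\Delta_n^*\|$, which is again of the uncontrolled (transposed) form, so condition (c) never enters. You were right to be suspicious of this step; whether the transposed estimate holds for a general, non-self-adjoint characteristic sequence is simply not addressed by the paper, and you should not feel obliged to supply it.
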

\begin{proof}
To show (1), fix $a\in \fA$. By definition of a characteristic sequence, we have that 
\[
\lim_{n\to \infty }\psi(\Delta_n)=1
\]
and
\[
\limsup_{n\to\infty}\|\Delta_n^* a \Delta_n\|\leq |\psi(a)|.
\]
On the other hand, invoking Lemma \ref{L:charseqlim}, we obtain
\[
|\psi(a)|=\lim_{n\to \infty}|\psi(\Delta_n^* a\Delta_n)|\leq \liminf_{n\to\infty}\|\Delta_n^* a \Delta_n\|
\]
whence
\[
\lim_{n\to\infty}\|\Delta_n a \Delta_n^*\|=|\psi(a)|.
\]

Next, let $\phi$ be a state on $\fA$ with $\phi\neq \psi$ and let $(\Delta_{n_m})_m$ be a subsequence. It is readily seen that $(\Delta_{n_m})_m$ is still a characteristic sequence for $\psi$. Thus, by virtue of Lemma \ref{L:charseqlim}, we see that
\[
\lim_{m\to\infty}|\phi(\Delta_{n_m})|<1.
\]
Consequently, we see that
\[
\limsup_{n\to\infty}|\phi(\Delta_n)|<1
\]
and (2) holds.

Assume now that 
\[
\psi=\frac{1}{2}(\phi_1+\phi_2)
\]
for some states $\phi_1,\phi_2$ on $\fA$. Since
\[
\lim_{n\to\infty} \psi(\Delta_n)=1
\]
a routine verification yields that
\[
\lim_{n\to\infty} |\phi_1(\Delta_n)|=1=\lim_{n\to\infty} |\phi_2(\Delta_n)|.
\]
Therefore $\phi_1=\phi_2=\psi$ by (2), and thus $\psi$ is pure. We have established (3). 

For (4), we assume that $\Delta_n\in \fS$ for each $n\in \bN$ and we let $\phi$ be a state on $\fA$ which agrees with $\psi$ on $\fS$. Then
\[
\lim_{n\to \infty}|\phi(\Delta_n)|=\lim_{n\to \infty }|\psi(\Delta_n)|=1
\]
so that $\phi=\psi$ by part (2). We conclude that $\psi$ has the unique extension property with respect to $\fS$.
\end{proof}

The previous result shows that if a state $\psi$ on $\fA$ admits a characteristic sequence in $\fS$, then $\psi$ shares several properties with $\fS$-peaking states. Indeed, (3) and (4) correspond to Lemma \ref{L:peakuep}, while (2) says that $\psi$ can be considered to be  ``asymptotically $\fS$-peaking" on the sequence $(\Delta_n)_n$. Note however that typically a characteristic sequence does not consist of self-adjoint elements (see Example \ref{E:Toeplitz} below). We mention an outstanding problem that we have not been able to settle.

\begin{question}
Let $\fA$ be a unital $\rC^*$-algebra and let $\fS\subset \fA$ be an operator system. Let $\psi$ be a pure state on $\fA$ that has the unique extension property with respect to $\fS$. Does there always exist a characteristic sequence in $\fS$ for $\psi$?
\end{question}

A positive answer (even in the commutative case) would constitute a nice analogue of the classical fact that for uniform algebras, the Choquet boundary coincides with the set of peak points  \cite[Theorem II.11.3]{gamelin1969}. As mentioned before, this is known to fail for general function systems \cite[page 43]{phelps2001}. It is unclear to us at the moment whether the asymptotic peaking behaviour of characteristic sequences is flexible enough to remedy this defect. 

We close this section by answering the previous question in the affirmative in some special cases. For that purpose, recall that an operator $T\in B(\H)$ is said to be a \emph{pure contraction} if $\|T\|\leq 1$ and
\[
\lim_{n\to \infty}\|T^{*n}\xi\|=0, \quad \xi\in \H.
\]

\begin{theorem}\label{T:globpeakingessab}
Let $\H$ be a Hilbert space. Let $\fA\subset B(\H)$ be a unital $\rC^*$-algebra which contains the ideal of compact operators $\fK$ and such that the quotient $\fA/\fK$ is commutative. Let $\A\subset \fA$ be a unital closed subalgebra such that for every character $\omega$ on $\fA$, there is a pure contraction $t_\omega\in \A$ with the property that
\[
|\chi(t_\omega)|<1=\omega(t_\omega)
\]
whenever $\chi$ is a character on $\fA$ such that $\chi\neq\omega$.
Then, every pure state on $\fA$ admits a characteristic sequence in $\fS+\fK$, where $\fS$ is the operator system generated by $\A$.
\end{theorem}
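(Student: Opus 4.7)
The plan is to split the pure states of $\fA$ into two classes and construct characteristic sequences separately for each. Since $\fA\supset\fK$ and $\fK$ admits, up to unitary equivalence, a unique irreducible representation, every pure state $\psi$ on $\fA$ is either a vector state $\omega_\xi(a)=\langle a\xi,\xi\rangle$ (if it does not annihilate $\fK$) or, because $\fA/\fK$ is commutative, a character of $\fA$ (if it annihilates $\fK$).

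For a vector state $\omega_\xi$, I would take the constant sequence $\Delta_n=p\in\fK\subset\fS+\fK$, where $p$ is the rank-one orthogonal projection onto $\bC\xi$. The identity $p^*ap=\omega_\xi(a)p$ shows immediately that $\|p\|=1$, $\omega_\xi(p)=1$, and $\|p^*ap\|=|\omega_\xi(a)|$ for every $a\in\fA$, so all three axioms of a characteristic sequence are satisfied.

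For a character $\omega$, with $t_\omega\in\A$ the pure contraction supplied by the hypothesis, my candidate is $\Delta_n=t_\omega^n\in\A\subset\fS\subset\fS+\fK$. Since $\omega(t_\omega)=1=\|t_\omega\|$, Choi's multiplicative-domain theorem places $t_\omega$ in the multiplicative domain of $\omega$, forcing $\omega(t_\omega^n)=1$; this simultaneously gives $\|\Delta_n\|=1$ and $\omega(\Delta_n)\to 1$. The key condition to verify is
\[
\limsup_{n\to\infty}\|t_\omega^{*n}\,a\,t_\omega^n\|\le|\omega(a)|,\qquad a\in\fA.
\]
Writing $a=\omega(a)I+b$ with $b\in\ker\omega$, the scalar part contributes at most $|\omega(a)|\cdot\|t_\omega^{*n}t_\omega^n\|\le|\omega(a)|$, so the task reduces to showing $\|t_\omega^{*n}\,b\,t_\omega^n\|\to 0$ for every $b\in\ker\omega$. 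Let $q\colon\fA\to\fA/\fK=C(X)$ be the quotient map and $x_0\in X$ the point corresponding to $\omega$; since $q(t_\omega)$ peaks at $x_0$ while $q(b)$ vanishes there, the commutative peaking calculation of Example \ref{E:globpeakunif} gives $\||q(t_\omega)|^{2n}\,q(b)\|_\infty\to 0$, i.e.\ the essential norm $\|t_\omega^{*n}\,b\,t_\omega^n+\fK\|$ tends to zero. To pass from essential to operator norm, the commutativity of $\fA/\fK$ yields $[t_\omega^n,b]\in\fK$, and one writes $b\,t_\omega^n=K_n+R_n$ with $K_n\in\fK$ and $\|R_n\|\to 0$; combined with the pure-contraction property that $t_\omega^{*n}\to 0$ strongly (which forces $\|t_\omega^{*n}K\|\to 0$ for any fixed compact $K$), this should close the argument.

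The main obstacle I anticipate is the uniform-in-$n$ control of the varying compacts $K_n$ in the last step: strong convergence $t_\omega^{*n}\to 0$ does not by itself entail $\|t_\omega^{*n}K_n\|\to 0$ for a bounded sequence of compact operators. The flexibility the theorem affords --- allowing $\Delta_n\in\fS+\fK$ rather than $\fS$ alone --- should prove decisive here, enabling an approximate-identity argument in $\fK$ to absorb a suitable compact correction into $\Delta_n$ that sweeps away the residual compact piece of $\Delta_n^*a\,\Delta_n$ in the limit.
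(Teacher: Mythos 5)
Your overall architecture matches the paper's: the dichotomy of pure states into vector states (handled by the constant rank-one projection in $\fK$) and characters (handled by $\Delta_n=t_\omega^n$), the norm and value conditions via multiplicativity, and the reduction of condition (c) to showing $\|t_\omega^{*n}b\,t_\omega^n\|\to 0$ for $b\in\ker\omega$, with the essential-norm estimate in $\fA/\fK\cong C(X)$ coming from the commutative peaking computation. But the proof is not complete, and you have correctly located the gap yourself: after writing $b\,t_\omega^n=K_n+R_n$ with $K_n\in\fK$ and $\|R_n\|\to 0$, you are left with $\|t_\omega^{*n}K_n\|$, and strong convergence $t_\omega^{*n}\to 0$ gives no control over a sequence of \emph{varying} compacts. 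The proposed remedy is off target: the ideal $\fK$ appears in the statement $\fS+\fK$ only to accommodate the vector-state case (where the characteristic sequence is the constant rank-one projection); in the character case the paper's sequence stays in $\fS$, and perturbing $t_\omega^n$ by compact corrections neither changes the quotient data nor removes the need to estimate $t_\omega^{*m}$ against compacts that depend on $m$ (and it would also disturb the normalization $\|\Delta_n\|=1$). So as written, the last step does not close.

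The missing idea is to arrange matters so that only \emph{one fixed} compact operator ever appears, by exploiting the semigroup property of the powers. Given $a\in\fA$ and $\eps>0$, first use your essential-norm computation to choose a single index $N$ with $\|q(\Delta_N^* a\Delta_N)\|<|\omega(a)|+\eps$, and then a single compact $K$ with $\|\Delta_N^* a\Delta_N+K\|<|\omega(a)|+\eps$. Since $\Delta_{N+m}=\Delta_N\Delta_m$ and $\|\Delta_m\|\leq 1$, multiplying by $\Delta_m^*$ on the left and $\Delta_m$ on the right yields
\[
\|\Delta_{N+m}^* a\Delta_{N+m}+\Delta_m^* K\Delta_m\|<|\omega(a)|+\eps,\quad m\in\bN .
\]
Now $K$ is fixed, so the strong convergence $t_\omega^{*m}\to 0$ (purity of the contraction) gives $\|\Delta_m^* K\|\to 0$, hence $\|\Delta_m^* K\Delta_m\|\to 0$, and therefore $\limsup_{n\to\infty}\|\Delta_n^* a\Delta_n\|\leq|\omega(a)|+\eps$; letting $\eps\to 0$ establishes condition (c). This is precisely how the paper's proof of Theorem \ref{T:globpeakingessab} concludes, and it is the step your argument is missing.
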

\begin{proof}
Let $\omega$ be a pure state on $\fA$. Based on the discussion from Example \ref{E:peakingBH}, we see that $\omega$ is either a vector state or it annihilates $\fK$.  It also follows from Example \ref{E:gapeakingBH} that every vector state on $\fA$ admits a (constant) characteristic sequence in $\fK$. 
Next, assume $\omega$ annihilates $\fK$. There is a pure state $\widehat\omega$ on $\fA/\fK$ with the property that if $q:\fA\to \fA/\fK$ denotes the quotient map, then $\omega=\widehat\omega\circ q$. Since $\fA/\fK$ is a commutative $\rC^*$-algebra, $\widehat \omega$ must be a character and thus so is $\omega$. By assumption, there is a pure contraction $t_\omega\in \A$ with the property that
\[
|\chi(t_\omega)|<1=\omega(t_\omega)
\]
for every character $\chi$ on $\fA$ such that $\chi\neq\omega$.  In particular, this means that
\begin{equation}\label{E:charineq}
|\theta(q(t_\omega))|<1=\widehat \omega(q(t_\omega))
\end{equation}
for every character $\theta$ on $\fA/\fK$ such that $\theta \neq \widehat\omega$.
We now verify that 
\[
\Delta_n=t_\omega^{n}, \quad n\in \bN
\]
is a characteristic sequence for $\omega$ in $\fS$. Clearly, we have that $\omega(\Delta_n)=1$ for every $n\in \bN$. Moreover, Example \ref{E:globpeakunif} along with Equation (\ref{E:charineq}) implies that
\[
\lim_{n\to \infty}\|q(\Delta^*_n a \Delta_n)\|=  |\omega(a)|, \quad a\in \fA.
\]
Fix $a\in\fA$ and let $\eps>0$. Choose $N\in \bN$ with the property that
\[
\|q(\Delta^*_N a \Delta_N)\|< |\omega(a)|+\eps.
\]
We can find a compact operator $K$ such that
\[
\|\Delta^*_N a \Delta_N+K\|< |\omega(a)|+\eps.
\]
Since $\|\Delta_m\|=1$ and $\Delta_N \Delta_m=\Delta_{N+m}$ for every $m\in \bN$, we conclude that
\[
\|\Delta^*_{N+m} a \Delta_{N+m}+\Delta^*_m K\Delta_m\|< |\omega(a)|+\eps, \quad m\in \bN.
\]
Now, the sequence $(\Delta^*_m)_m$ converges to $0$ in the strong operator topology as $t_\omega$ is assumed to be a pure contraction. Since $K$ is compact, the sequence $(\Delta^*_m K\Delta_m)_m$ must converge to $0$ in norm and we infer
\[
\limsup_{m\to \infty}\|\Delta^*_{N+m} a \Delta_{N+m}\|\leq |\omega(a)|+\eps.
\]
Since $\eps>0$ is arbitrary, we conclude that $(\Delta_n)_n$ is a characteristic sequence for $\omega$.
\end{proof}

We now exhibit a natural example satisfying the conditions of the previous theorem.

\begin{example}\label{E:Toeplitz}
Fix a positive integer $d\geq 1$. Let $\bB_d\subset \bC^d$ denote the open unit ball and let $\bS_d$ denotes its boundary, the unit sphere. The \emph{Drury-Arveson space} $H^2_d$ is the reproducing kernel Hilbert space on $\bB_d$ with reproducing kernel given by the formula
\[
k(z,w)=\frac{1}{1-\langle z,w\rangle_{\bC^d}}, \quad z,w\in \bB_d.
\] 
This is a Hilbert space of holomorphic functions on $\bB_d$. If for each $\lambda\in \bB_d$ we put
\[
k_\lambda(z)=\frac{1}{1-\langle z,\lambda\rangle_{\bC^d}}, \quad z\in \bB_d
\]
then
\[
f(\lambda)=\langle f,k_\lambda \rangle_{H^2_d}, \quad f\in H^2_d.
\]
A function $\phi:\bB_d\to \bC$ is a \emph{multiplier} for $H^2_d$ if $\phi f\in H^2_d$ for every $f\in H^2_d$. Examples of such functions include the holomorphic polynomials in $d$ variables.  Every multiplier $\phi$ gives rise to a multiplication operator $M_\phi\in B(H^2_d)$, and the identification $\phi\mapsto M_\phi$ allows us to view the algebra of multipliers as an operator algebra on $H^2_d$. It is easily verified that if $\phi$ is a multiplier, then
\[
M_\phi^* k_\lambda=\ol{\phi(\lambda)}k_\lambda, \quad \lambda\in \bB_d.
\]
We refer the reader to the book \cite{agler2002} for an excellent reference on these topics.

Let $\A_d\subset B(H^2_d)$ denote the norm closure of the polynomial multipliers and define the \emph{Toeplitz algebra} as $\fT_d=\rC^*(\A_d)$.  Recall that $\fT_d$ contains the ideal of compact operators $\fK$ on $H^2_d$ and that $\fT_d/\fK$ is $*$-isomorphic to $C(\bS_d)$ \cite[Theorem 5.7]{arveson1998}. In particular, any character on $\fT_d/\fK$ can be identified with the character of evaluation at some point in $\bS_d$.

Let $\omega$ be a character on $\fT_d$, which necessarily annihilates $\fK$. There is a character $\widehat\omega$ on $\fT_d/\fK$ with the property that if $q:\fT_d\to \fT_d/\fK$ denotes the quotient map, then $\omega=\widehat\omega\circ q$. 
Assume that $\widehat\omega$ is identified with the character of evaluation at $\zeta\in \bS_d$. Consider the function defined as
\[
\phi_\zeta(z)=\frac{1}{2}(1+\langle z,\zeta\rangle_{\bC^d}), \quad z\in \ol{\bB_d}.
\]
The row operator $(M_{z_1},M_{z_2},\ldots,M_{z_d})$ is contractive, so that $M_{\phi_\zeta}\in \A_d$ and $\|M_{\phi_\zeta}\|=1$. Moreover, $\phi_\zeta(\zeta)=1$ and $|\phi_\zeta(z)|<1$ for every $z\in \ol{\bB_d}$ with $z\neq \zeta$. In particular, we see that 
\[
\lim_{n\to \infty}M_{\phi_\zeta}^{*n}k_\lambda=0, \quad \lambda\in\bB_d.
\]
Since  the subset $\{k_\lambda:\lambda\in \bB_d\}$ spans a dense subspace of $H^2_d$, we conclude that $M_{\phi_\zeta}$ is a pure contraction. It follows from Theorem \ref{T:globpeakingessab} that every pure state on $\fT_d$ admits a characteristic sequence in $\fK+\fS_d$, where $\fS_d$ is the operator system generated by $\A_d$.

\end{example}

We note in passing that the Toeplitz algebra in the previous example is of prime importance in multivariate operator theory, dilation theory and function theory, as exposed in \cite{arveson1998} and \cite{AM2000} (see also the references therein). Furthermore, we mention that by virtue of \cite[Proposition 2.4]{kennedyshalit2015}, the argument above can easily adapted to the setting of quotient modules of the Drury-Arveson space for which the essential normality conjecture is known to hold \cite{englis2015}, \cite{DTY2016}.

\section{A local version of the hyperrigidity conjecture}\label{S:local}

Recall that Theorem \ref{T:arvlochr} exhibits, for commutative $\rC^*$-algebras, a local version of hyperrigidity using the spectral measure associated to a separable $*$-representation. Our aim in this final section is to generalize this result to arbitrary $\rC^*$-algebras. In the absence of a spectral measure, a key step in this endeavour is to give an appropriate interpretation of what local hyperrigidity should be. To do this, we capitalize on the foundations that were laid in Section \ref{S:aspeaking} and perform a localization procedure with the help of characteristic sequences. We explain at the end of the section how this is equivalent to Theorem \ref{T:arvlochr} when we specialize our result to uniform algebras.

The main technical observation is the following.

\begin{lemma}\label{L:gapineq}
Let $\fA$ be a unital  $\rC^*$-algebra and let $\fS\subset \fA$ be an operator system. Let $\pi:\fA\to B(\H)$ be a unital $*$-representation and let $\Pi:\fA\to B(\H)$ be a unital completely positive extension of $\pi|_{\fS}$. Let $\psi$ be a state on $\fA$ which admits a characteristic sequence $(\Delta_n)_n$ in $\fS$. Then, we have
\[
\limsup_{n\to \infty}\| \Pi(a)\pi(\Delta_n)\|\leq \psi(a^*a)^{1/2}
\]
for every $a\in \fA$. Furthermore, we have
\[
\lim_{n\to\infty}\|\pi(\Delta_n)^*\Pi(a)\pi(\Delta_n)\|=0
\]
for every self-adjoint element $a\in \fA$ such that $\psi(a)=0$.
\end{lemma}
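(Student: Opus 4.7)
The plan is to construct natural positive functionals on $\fA$ from the local data and combine weak-$*$ compactness with the unique extension property of $\psi$ supplied by Theorem~\ref{T:gapeakSchwarz}(4). For each $n\in\bN$ and unit vector $\xi\in\H$, introduce
\[
\phi_{n,\xi}(y)=\langle \Pi(y)\pi(\Delta_n)\xi,\pi(\Delta_n)\xi\rangle,\qquad y\in\fA,
\]
which is positive with $\|\phi_{n,\xi}\|=\phi_{n,\xi}(1)\leq 1$. The key observation is that because $\Delta_n,\Delta_n^*\in\fS$ and $\Pi$ agrees with the $*$-representation $\pi$ on $\fS$, the identity $\phi_{n,\xi}(y)=\langle \pi(\Delta_n^* y\Delta_n)\xi,\xi\rangle$ holds for every $y\in\fS$; consequently $|\phi_{n,\xi}(y)|\leq\|\Delta_n^* y\Delta_n\|$, and the characteristic sequence property yields $\limsup_n|\phi_{n,\xi_n}(y)|\leq|\psi(y)|$ for all $y\in\fS$, regardless of how the unit vectors $\xi_n$ are chosen.

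For the first inequality, I would start by applying the Schwarz inequality $\Pi(a)^*\Pi(a)\leq\Pi(a^*a)$ to squeeze
\[
\|\Pi(a)\pi(\Delta_n)\|^2\leq\|\pi(\Delta_n)^*\Pi(a^*a)\pi(\Delta_n)\|=\sup_{\|\xi\|=1}\phi_{n,\xi}(a^*a).
\]
Proceed by contradiction: if $\limsup_n\|\Pi(a)\pi(\Delta_n)\|^2>\psi(a^*a)+\delta$ for some $\delta>0$, select unit vectors $\xi_n$ with $\phi_{n,\xi_n}(a^*a)\geq\psi(a^*a)+\delta$ infinitely often. By Banach-Alaoglu, extract a weak-$*$ cluster point $\phi\in\fA^*$ of $(\phi_{n,\xi_n})$; it is positive with $\phi(1)\leq 1$, and the preceding bound forces $|\phi(y)|\leq|\psi(y)|$ on $\fS$, so $\phi|_{\fS}$ annihilates $\ker(\psi|_{\fS})$. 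Writing $\fS=\bC I\oplus\ker(\psi|_{\fS})$, this gives $\phi|_{\fS}=\phi(1)\psi|_{\fS}$. Invoking the unique extension property from Theorem~\ref{T:gapeakSchwarz}(4), I upgrade this to the global identity $\phi=\phi(1)\psi$ on all of $\fA$: if $\phi(1)=0$ positivity forces $\phi=0$, and otherwise $\phi/\phi(1)$ is a state extending $\psi|_{\fS}$, hence equal to $\psi$. Therefore $\phi(a^*a)=\phi(1)\psi(a^*a)\leq\psi(a^*a)$, contradicting the construction.

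The second assertion uses the same machinery. Because $a$ is self-adjoint, $\pi(\Delta_n)^*\Pi(a)\pi(\Delta_n)$ is self-adjoint and its norm equals $\sup_{\|\xi\|=1}|\phi_{n,\xi}(a)|$. If this norm failed to tend to zero, I could extract unit vectors $\xi_n$ with $|\phi_{n,\xi_n}(a)|\geq\delta>0$ along a subsequence; a weak-$*$ cluster point $\phi$ of $(\phi_{n,\xi_n})$ would then satisfy $\phi=\phi(1)\psi$ by the argument above, forcing $\phi(a)=\phi(1)\psi(a)=0$ and contradicting $|\phi(a)|\geq\delta$.

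The technical heart of the argument lies in passing from the inequality $|\phi(y)|\leq|\psi(y)|$ on $\fS$ to the global equality $\phi=\phi(1)\psi$ on $\fA$, and this is precisely where the unique extension property intervenes—it is available for free because the characteristic sequence was assumed to lie in $\fS$. Without this feature, a weak-$*$ cluster point $\phi$ could be any positive extension of $\phi(1)\psi|_{\fS}$ to $\fA$, and the contradiction would evaporate.
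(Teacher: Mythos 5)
Your argument is correct, but it reaches the conclusion by a genuinely different route than the paper. The paper's proof is order-theoretic: for the first bound it squeezes $\pi(\Delta_n)^*\Pi(a)^*\Pi(a)\pi(\Delta_n)$ by $\pi(\Delta_n^*s\Delta_n)$ for self-adjoint majorants $s\in\fS$ of $a^*a$, and for the second it sandwiches $\pi(\Delta_n)^*\Pi(a)\pi(\Delta_n)$ between $\pi(\Delta_n^*c_\eps\Delta_n)$ and $\pi(\Delta_n^*b_\eps\Delta_n)$ for minorants/majorants $c_\eps\le a\le b_\eps$ in $\fS$; it then identifies $\inf\{\psi(s):s\in\fS,\ s\ge a^*a\}=\psi(a^*a)$ and $\sup\{\psi(c):c\le a\}=0=\inf\{\psi(b):b\ge a\}$ by citing (an adaptation of) Arveson's Proposition 6.2, which is where the unique extension property enters. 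You instead localize with the functionals $\phi_{n,\xi}=\langle\Pi(\cdot)\pi(\Delta_n)\xi,\pi(\Delta_n)\xi\rangle$, use property (c) of the characteristic sequence only on $\fS$ to force any weak-$*$ cluster point $\phi$ to satisfy $|\phi|\le|\psi|$ on $\fS$, hence $\phi|_{\fS}=\phi(1)\psi|_{\fS}$, and then invoke the unique extension property directly to get $\phi=\phi(1)\psi$ globally, deriving both assertions by contradiction from the same cluster-point identification (using, for the second one, that the norm of a self-adjoint operator equals its numerical radius). What your approach buys is uniformity—one compactness argument handles both statements—and independence from the external Korovkin-type result \cite[Proposition 6.2]{arveson2011}, relying only on Theorem \ref{T:gapeakSchwarz}(4); what it gives up is the explicit, quantitative character of the paper's estimates (your proof is non-constructive, proceeding through subnets and Banach--Alaoglu rather than exhibiting the $\eps$-bounds directly). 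Both proofs use the hypothesis $\Delta_n\in\fS$ in exactly the same essential way, namely to obtain the unique extension property of $\psi$.
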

\begin{proof}
Fix $a\in \fA$. If $s\in \fS$ is a self-adjoint element that satisfies $s\geq a^*a$, then since $\Pi$ and $\pi$ coincide on $\fS$ we find
\[
\pi(\Delta_n)^*\Pi(a^*a)\pi(\Delta_n)\leq \pi(\Delta_n)^*\pi(s)\pi(\Delta_n)=\pi(\Delta_n^* s\Delta_n)
\] 
whence
\[
0\leq \pi(\Delta_n)^*\Pi(a)^*\Pi(a)\pi(\Delta_n)\leq \pi(\Delta_n^* s\Delta_n)
\]
for every $n\in \bN$, by the Schwarz inequality. In particular, we find
\[
\| \Pi(a)\pi(\Delta_n)\|^2\leq \|\Delta_n^* s\Delta_n\|, \quad n\in \bN
\]
whenever $s\in \fS$ satisfies $s\geq a^*a$. Using that $(\Delta_n)_n$ is a characteristic sequence for $\psi$, we infer that
\[
\limsup_{n\to \infty}\|\Pi(a)\pi(\Delta_n)\|\leq \psi(s)^{1/2}
\]
for every $s\in \fS$ such that $s\geq a^*a$. Consequently,
\[
\limsup_{n\to \infty}\|\Pi(a)\pi(\Delta_n)\|\leq \inf \{\psi(s):s\in \fS, s\geq a^* a\}^{1/2}.
\]
 Invoke now Theorem \ref{T:gapeakSchwarz} to see that $\psi$ has the unique extension property with respect to $\fS$. It follows from a straightforward adaptation of \cite[Proposition 6.2]{arveson2011} that
\[
\inf \{\psi(s):s\in \fS, s\geq a^* a\}=\psi(a^*a)
\]
and thus
\[
\limsup_{n\to \infty}\| \Pi(a)\pi(\Delta_n)\|\leq \psi(a^*a)^{1/2}.
\]
We have established the first statement.
 
To establish the second, fix a self-adjoint element $a\in \fA$ such that $\psi(a)=0$ and a number $\eps>0$. Then, we can find self-adjoint elements $b_\eps,c_\eps\in \fS$ such that $c_\eps\leq a\leq b_\eps$ and
\[
\sup\{\psi(c):c\in \fS, c\leq a\}\leq \psi(c_\eps)+\eps,
\]
\[
\inf\{\psi(b):b\in \fS, b\geq a\}\geq \psi(b_\eps)-\eps.
\]
Next, note that since $(\Delta_n)_n$ is a characteristic sequence for $\psi$, we can find $N\in \bN$ such that
\[
\|\Delta_n^* c_\eps \Delta_n\|\leq |\psi(c_\eps)|+\eps, \quad \|\Delta_n^* b_\eps \Delta_n\|\leq |\psi(b_\eps)|+\eps
\]
if $n\geq N$. In particular, we see that
\[
-(|\psi(c_\eps)|+\eps)I\leq \pi(\Delta_n^* c_\eps \Delta_n), \quad \pi( \Delta_n^* b_\eps \Delta_n)\leq (|\psi(b_\eps)|+\eps)I
\]
if $n\geq N$.
On the other hand, using that $\Pi$ and $\pi$ coincide on $\fS$, we note that
\[
\pi(\Delta_n^*)\pi( c_\eps) \pi(\Delta_n)\leq \pi(\Delta_n)^*\Pi(a)\pi(\Delta_n)\leq \pi(\Delta_n^*)\pi( b_\eps) \pi(\Delta_n), \quad n\in \bN
\]
or
\[
\pi(\Delta_n^* c_\eps\Delta_n)\leq \pi(\Delta_n)^*\Pi(a)\pi(\Delta_n)\leq \pi(\Delta_n^* b_\eps \Delta_n), \quad n\in \bN.
\]
Hence, we find
\[
-(|\psi(c_\eps)|+\eps)I\leq \pi(\Delta_n)^*\Pi(a)\pi(\Delta_n)\leq  (|\psi(b_\eps)|+\eps)I, \quad n\geq N.
\]
Recall now that $\psi(a)=0$ so that $\psi(b_\eps)\geq 0$ and $\psi(c_\eps)\leq 0$, and therefore
\[
(\psi(c_\eps)-\eps)I\leq \pi(\Delta_n)^*\Pi(a)\pi(\Delta_n)\leq  (\psi(b_\eps)+\eps)I, \quad n\geq N,
\]
which in turn implies
\[
(\sup\{\psi(c):c\in \fS, c\leq a\}-2\eps)I \leq \pi(\Delta_n)^*\Pi(a)\pi(\Delta_n)
\]
and
\[
\pi(\Delta_n)^*\Pi(a)\pi(\Delta_n)\leq (\inf\{\psi(b):b\in \fS, b\geq a\}+2\eps)I
\]
if $n\geq N$, because of the choice of $b_\eps$ and $c_\eps$. As above, by Theorem \ref{T:gapeakSchwarz} we see that $\psi$ has the unique extension property with respect to $\fS$, and it follows from \cite[Proposition 6.2]{arveson2011} that
\[
\sup\{\psi(c):c\in \fS, c\leq a\}=0=\inf\{\psi(b):b\in \fS, b\geq a\}.
\]
Therefore,
\[
-2\eps\leq \pi(\Delta_n)^*\Pi(a)\pi(\Delta_n)\leq  2\eps I, \quad n\geq N.
\]
Since $\eps>0$ was arbitrary, we conclude that 
\[
\lim_{n\to\infty}\pi(\Delta_n)^*\Pi(a)\pi(\Delta_n)=0.
\]

\end{proof}

We now arrive at one of the main results of the paper, which yields a local form of the hyperrigidity conjecture for general operator systems. Recall that given a state $\psi$, we denote by $\sigma_\psi$ the associated GNS representation.

\begin{theorem}\label{T:gapeaklocalhr}
Let $\fA$ be a unital  $\rC^*$-algebra and let $\fS\subset \fA$ be an operator system. Let $\pi:\fA\to B(\H)$ be a unital $*$-representation and let $\Pi:\fA\to B(\H)$ be a unital completely positive extension of $\pi|_{\fS}$. Let $\psi$ be a state on $\fA$ which admits a characteristic sequence $(\Delta_n)_n$ in $\fS$. Then, we have
\[
\limsup_{n\to\infty} \|(\Pi(a)-\pi(a))\pi(\Delta_n)\|\leq 2\dist(\sigma_\psi(a),\sigma_\psi(\fS))
\]
and
\[
\lim_{n\to\infty} \|\pi(\Delta_n)^*(\Pi(a)-\pi(a))\pi(\Delta_n)\|=0
\]
for every $a\in \fA$.
\end{theorem}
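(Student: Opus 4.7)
The plan is to exploit two facts repeatedly: first, that $\Pi$ and $\pi$ agree on $\fS$, so we may freely subtract any element of $\fS$ (resp.\ any scalar) inside the argument of $\Pi(a)-\pi(a)$; second, that Lemma \ref{L:gapineq} controls $\Pi$ on the right by characteristic sequences, while the multiplicativity of $\pi$ together with the defining property of a characteristic sequence controls $\pi$ on the right.

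For the first inequality, fix $s\in \fS$ and write
\[
(\Pi(a)-\pi(a))\pi(\Delta_n)=\Pi(a-s)\pi(\Delta_n)-\pi(a-s)\pi(\Delta_n),
\]
since $\Pi(s)=\pi(s)$. Applying Lemma \ref{L:gapineq} to $a-s$ yields
\[
\limsup_{n\to\infty}\|\Pi(a-s)\pi(\Delta_n)\|\leq \psi((a-s)^*(a-s))^{1/2}.
\]
For the second term, since $\pi$ is a $*$-homomorphism,
\[
\|\pi(a-s)\pi(\Delta_n)\|^2=\|\pi(\Delta_n^*(a-s)^*(a-s)\Delta_n)\|\leq \|\Delta_n^*(a-s)^*(a-s)\Delta_n\|,
\]
and by condition (c) of a characteristic sequence the right-hand side has limsup at most $\psi((a-s)^*(a-s))$. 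Combining these two bounds and using
\[
\psi((a-s)^*(a-s))^{1/2}=\|\sigma_\psi(a-s)\xi_\psi\|\leq \|\sigma_\psi(a)-\sigma_\psi(s)\|,
\]
I would conclude
\[
\limsup_{n\to\infty}\|(\Pi(a)-\pi(a))\pi(\Delta_n)\|\leq 2\|\sigma_\psi(a)-\sigma_\psi(s)\|,
\]
and then take the infimum over $s\in \fS$.

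For the second equality, I would first reduce to the self-adjoint case by the decomposition $a=a_1+ia_2$. Given a self-adjoint $a$, the key trick is to replace $a$ by $b=a-\psi(a)I$; since the scalar cancels, $\Pi(a)-\pi(a)=\Pi(b)-\pi(b)$, and now $b$ is self-adjoint with $\psi(b)=0$. The second assertion of Lemma \ref{L:gapineq} then gives
\[
\lim_{n\to\infty}\|\pi(\Delta_n)^*\Pi(b)\pi(\Delta_n)\|=0,
\]
while multiplicativity of $\pi$ and condition (c) of the characteristic sequence give
\[
\|\pi(\Delta_n)^*\pi(b)\pi(\Delta_n)\|=\|\pi(\Delta_n^*b\Delta_n)\|\leq \|\Delta_n^*b\Delta_n\|\longrightarrow |\psi(b)|=0.
\]
A triangle inequality finishes the proof.

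Neither step involves a serious obstacle once Lemma \ref{L:gapineq} is at hand; the only subtlety is recognizing the correct element to subtract in each half (an element of $\fS$ for the first inequality, and the scalar $\psi(a)I$ for the limit statement) in order to funnel everything through the unique-extension-property machinery packaged inside Lemma \ref{L:gapineq}.
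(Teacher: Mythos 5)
Your proof is correct and follows essentially the same route as the paper: for the first bound you subtract an arbitrary $s\in\fS$ and for the second you reduce to self-adjoint $a$ and subtract the scalar $\psi(a)I$, funnelling the $\Pi$-terms through Lemma \ref{L:gapineq} and taking the infimum over $s$ to obtain $2\dist(\sigma_\psi(a),\sigma_\psi(\fS))$. The only cosmetic difference is that the paper handles the $\pi$-terms by applying Lemma \ref{L:gapineq} again (viewing $\pi$ itself as a unital completely positive extension of $\pi|_{\fS}$), whereas you use the multiplicativity of $\pi$ together with condition (c) of a characteristic sequence directly; both yield the same estimates.
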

\begin{proof}
Let $a\in \fA$. Since $\Pi$ and $\pi$ coincide on $\fS$, we note that for every $s\in \fS$ we have
\[
\Pi(a)-\pi(a)=\Pi(a-s)-\pi(a-s).
\]
On the other hand, by virtue of Lemma \ref{L:gapineq} we see for each $s\in \fS$ that
\[
\limsup_{n\to\infty}\|\Pi(a-s)\pi(\Delta_n)\|\leq \psi((a-s)^*(a-s))^{1/2}\leq \|\sigma_\psi(a-s)\|
\]
and
\[
\limsup_{n\to\infty}\|\pi(a-s)\pi(\Delta_n)\|\leq \psi((a-s)^*(a-s))^{1/2}\leq \|\sigma_\psi(a-s)\|,
\]
so that
\[
\limsup_{n\to\infty} \|(\Pi(a)-\pi(a))\pi(\Delta_n)\|\leq 2\|\sigma_\psi(a-s)\|.
\]
Hence
\[
\limsup_{n\to\infty} \|(\Pi(a)-\pi(a))\pi(\Delta_n)\|\leq 2 \dist(\sigma_\psi(a),\sigma_\psi(\fS)).
\]
We have established the first statement. Since $\fA$ is spanned algebraically by its self-adjoint elements, to establish the second statement it is no loss of generality to assume that $a\in \fA$ is self-adjoint. Furthermore, because
\[
\Pi(a)-\pi(a)=\Pi(a-\psi(a)I)-\pi(a-\psi(a)I)
\]
we may also assume without loss of generality that $\psi(a)=0$. By Lemma \ref{L:gapineq}, we conclude that
\[
\lim_{n\to\infty} \|\pi(\Delta_n)^*\Pi(a)\pi(\Delta_n)\|=0=\lim_{n\to\infty} \|\pi(\Delta_n)^*\pi(a)\pi(\Delta_n)\|
\]
which implies
\[
\lim_{n\to\infty} \|\pi(\Delta_n)^*(\Pi(a)-\pi(a))\pi(\Delta_n)\|=0.
\]
\end{proof}

In view of Theorem \ref{T:globpeakingessab} and Example \ref{E:Toeplitz}, one can now easily write down concrete consequences of Theorem \ref{T:gapeaklocalhr}. We leave the details to the reader, which should compare them with those in \cite[Section 6]{CH2016}.

To conclude the paper, we wish to examine Theorem \ref{T:gapeaklocalhr} carefully in the case where $\fA$ is commutative. First, we note that in this case, a pure state $\omega$ on $\fA$ must be a character, and so $\omega$ must coincide with its GNS representation $\sigma_\omega$. Therefore, 
\[
\sigma_\omega(\fA)=\sigma_\omega(\fS)=\bC
\]
and
\[
\dist(\sigma_\omega(a),\sigma_\omega(\fS))=0, \quad a\in \fA
\]
which simplifies the first conclusion of Theorem \ref{T:gapeaklocalhr} to read 
\[
\lim_{n\to\infty} \|(\Pi(a)-\pi(a))\pi(\Delta_n)\|=  0, \quad a\in \fA.
\] 
We now explain how this is equivalent to Theorem \ref{T:arvlochr} for operator systems generated by uniform algebras.

Let $(X,\rho)$ be a compact metric space, let $\fA=C(X)$ and let $\A\subset \fA$ be a closed unital subalgebra. Put $\fS=\A+\A^*$. Let $x_0\in X$ be a point in the Choquet boundary of $\A$, so that $x_0$ is a peak point for $\A$.  There is a function $\phi_0\in \A$ with the property that
\[
|\phi_0(y)|<1=\phi_0(x_0)
\]
for every $y\in X$ such that $y\neq x_0$.
Let $\omega$ be the pure state on $\fA$ of evaluation at $x_0$. If we let $\Delta_n=\phi_0^n\in \A$ for each $n\in \bN$, then we saw in Example \ref{E:globpeakunif} that $(\Delta_n)_n$ is a characteristic sequence in $\fS$ for $\omega$.

Now, let $\H$ be a separable Hilbert space and let $\pi:\fA\to B(\H)$ be a unital $*$-representation.  The reader should refer to Subsection \ref{SS:hyper} for the notation that is used below.
Fix $\delta>0$. Given $\eps>0$, by compactness of $X$ we may find $N\in \bN$ such that 
\[
|\Delta_n(y)|^2<\eps
\]
if $n\geq N$ and $y\in X$ satisfies $\rho(y,x_0)\geq \delta$. In particular this implies 
\[
\pi(\Delta_n)  \pi(\Delta_n) ^* \leq (E_\pi(x_0,\delta)+\eps I)^2, \quad n\geq N.
\]
Hence, for every $T\in B(\H)$ we have
\[
T\pi(\Delta_n)   \pi(\Delta_n) ^*T^*\leq T (E_\pi(x_0,\delta)+\eps I)^2T^*, \quad n\geq N
\]
and therefore
\[
\|T\pi(\Delta_n) \|\leq \|TE_\pi(x_0,\delta)\|+\eps \|T\|, \quad n\geq N.
\]
Since $\eps>0$ was arbitrary we obtain
\begin{equation}\label{E:specmeasure1}
\limsup_{n\to \infty}\|T\pi(\Delta_n) \|\leq \inf_{\delta>0}\|TE_\pi(x_0,\delta)\|, \quad T\in B(\H).
\end{equation}
Conversely,  fix $n\in \bN$. Given $0<\eps<1$ we may find $\delta_0$ with the property that 
\[
|\Delta_n(y)|\geq 1-\eps
\]
if $\rho(y,x_0)<\delta_0$. Hence, if $\delta<\delta_0$ then we see that
\[
(1-\eps)^2E_\pi(x_0,\delta)\leq \pi(\Delta_n)\pi(\Delta_n)^*.
\]
Hence, for every $T\in B(\H)$ we have
\[
(1-\eps)^2TE_\pi(x_0,\delta) T^*\leq T\pi(\Delta_n)   \pi(\Delta_n) ^*T^*, \quad \delta<\delta_0
\]
and therefore
\[
\|TE_\pi(x_0,\delta)\|\leq (1-\eps)^{-1} \|T\pi(\Delta_n) \|, \quad \delta<\delta_0.
\]
Since $0<\eps<1$ was arbitrary we obtain
\begin{equation}\label{E:specmeasure2}
\limsup_{\delta\to 0}\|TE_\pi(x_0,\delta)\|\leq \inf_{n\in \bN} \|T\pi(\Delta_n) \|, \quad T\in B(\H).
\end{equation}
Using Equations (\ref{E:specmeasure1}) and (\ref{E:specmeasure2}) we see that Theorems \ref{T:gapeaklocalhr} and  \ref{T:arvlochr} are equivalent in the case of uniform algebras, so that the main result of this section is a genuine generalization of Arveson's local hyperrigidity theorem.

\bibliography{/Users/raphaelclouatre/Dropbox/Research/bibliography/biblio_main}
%\bibliography{/Users/Raphael/Dropbox/Research/bibliography/biblio_main}
\bibliographystyle{plain}

\end{document}